\theoremstyle{plain}%
\newtheorem{thm}{Theorem}
\newtheorem{lem}{Lemma}
\newtheorem{prop}{Proposition}
\newtheorem{asm}{Assumption}
\newcommand{\bunderline}[1]{\underline{#1\mkern-2mu}\mkern2mu}
\newcommand{\E}{\mathbb{E}}
\newcommand{\KL}{\text{KL}}
\newcommand{\expvalb}[2]{\E_{#1}\left[ #2 \right]}
\newcommand{\Chebyshevbasislow}{9}
\newcommand{\Chebyshevbasishigh}{\the\numexpr(\Chebyshevbasislow*2)\relax}
\newenvironment{update}{\par\color{black}}{\par}
\def\BibTeX{{\rm B\kern-.05em{\sc i\kern-.025em b}\kern-.08em
    T\kern-.1667em\lower.7ex\hbox{E}\kern-.125emX}}
\begin{document}

\title{Gaussian-Based Parametric Bijections For Automatic Projection Filters}

\author{Muhammad F. Emzir, Zheng Zhao, Lahouari Cheded, Simo S\"arkk\"a}

\maketitle

\begin{abstract}
The automatic projection filter is a recently developed numerical method for projection filtering that leverages sparse-grid integration and automatic differentiation. However, its accuracy is highly sensitive to the accuracy of the cumulant-generating function computed via the sparse-grid integration, which in turn is also sensitive to the choice of the bijection from the canonical hypercube to the state space. In this paper, we propose two new adaptive parametric bijections for the automatic projection filter. The first bijection relies on the minimization of Kullback--Leibler divergence, whereas the second method employs the sparse-grid Gauss--Hermite quadrature. The two new bijections allow the sparse-grid nodes to adaptively move within the high-density region of the state space, resulting in a substantially improved approximation while using only a small number of quadrature nodes. The practical applicability of the methodology is illustrated in three simulated nonlinear filtering problems.
\end{abstract}

\begin{IEEEkeywords}
projection filter, adaptive bijection, numerical quadrature, automatic differentiation, sparse-grid integration
\end{IEEEkeywords}

\section{Introduction}
\label{sec:introduction}

The projection filter \cite{hanzon1991,brigo1999} is an approximate optimal filtering method based on projections of probability densities on manifolds. Specifically, the filter projects the Kushner--Stratonovich equation \cite{Kushner:1964} of the optimal filtering solution onto a finite-dimensional manifold of parametric densities, producing a finite-dimensional stochastic differential equation (SDE) representation. %
When the manifold is the manifold formed by mixtures of probability densities and the $L^2$ metric is used, then the projection filter is equivalent to a Galerkin method for solving the Kushner--Stratonovich equation (see \cite[Theorem 5.1]{armstrong2016a}).

The projection filter has so far been used only in limited scenarios. Outside of the Gaussian family, the most prevalent applications have been in univariate dynamical systems \cite{brigo1999,armstrong2023,koyama2018,kutschireiter2022}. Recently, Emzir et al.\ \cite{emzir2023} introduced an automatic projection filter for a wider class of filtering problems. Although the method is applicable to a large class of multivariate dynamical systems, its computational can be demanding for using a large number of quadrature nodes \cite{emzir2023}. Using a large number of nodes is often needed because of the fixed (rather than an adaptive) bijection between the canonical hypercube and the integration domain. In the other way around, the fixed bijection also practically limits the numerical accuracy of the filter when the computational budget is limited. 

\begin{update}
    In this study, we propose two bijections that transform the nodes of sparse-grid quadratures to adaptively cover the high-density domain of the projected conditional densities. The main challenge in constructing such bijections is found to be the impossibility of obtaining a bijection that transforms the integrand into a polynomial function in the transformed space, thereby achieving zero integration error (see Proposition \ref{prop:bijection_polynomial}). Further, we show that using squared integration error as an optimization cost function also leads to non-explicit bijection functions (see Proposition \ref{prop:convexity_of_abs_En}). To overcome these difficulties, in the first bijection, we minimize the Kullback--Leibler divergence between the projected conditional density and a Gaussian density via moment matching (see Proposition \ref{prop:zeta_xi_gaussian}). Not only does this result in an explicit bijection form, but it also optimizes the squared integration error under certain technical conditions (see Proposition \ref{prop:optimality}). We then adapt this technique to Gauss--Hermite quadrature.
\end{update} 
When the automatic projection filter algorithm is combined with these bijections, the part of the state space which has a high filtering density can be automatically tracked. This improvement requires much fewer quadrature nodes than those using static bijections. We illustrate the effectiveness of the proposed bijections using three numerical examples where it is challenging to confine conditional densities inside a fixed domain. We also generalize the filtering problems in comparison to \cite{emzir2023} by only requiring that the drift and diffusion functions belong to a vector space that is closed under partial differentiation.

The paper is organized as follows. We first review the projection filter for the exponential family in Section \ref{sec:AutomaticProjectionFilter}, and then explain how to propagate the SDE parameters using numerical integration and automatic differentiation. Section \ref{sec:ParametricBijections} delivers the main contributions of the paper, which are the two new bijections from the canonical hypercube to the state space of the projection filter. Section \ref{sec:NumericalExamples} shows the practical applicability of the bijections in the automatic projection filter in three simulated nonlinear filtering problems. Finally, Section \ref{sec:Conclusions} summarizes the results and concludes the paper.

\section{Automatic Projection Filter}\label{sec:AutomaticProjectionFilter}

In this section, we review the relevant theoretical results that constitute the foundation of the automatic projection filter \cite{emzir2023} and also to present the extension of the model considered in this paper. We consider optimal filtering problems on the following state-space model consisting of continuous-time stochastic dynamic and observation models:
\begin{subequations}
	\label{eqs:nonlinear_SDE}
	\begin{align}
		dx_t &= f(x_t) \, dt + \varrho(x_t) \, dW_t,\\
		dy_t &= h(x_t) \, dt + dV_t,
	\end{align}
\end{subequations}
where $x_t \in \mathcal{X} \coloneqq \mathbb{R}^d, y_t\in \mathcal{Y} \coloneqq \mathbb{R}^{d_y}$. The processes $\{ W_t , t \geq 0\}$  and  $\{ V_t , t \geq 0\}$ are independent Wiener processes taking values in $\mathbb{R}^{d_w}$ and $\mathbb{R}^{d_y}$ with invertible spectral density matrices $Q_t$ and $R_t$ for all $t\geq0$, respectively. For the sake of exposition, and without a loss of generality, in the rest of the paper, we assume that $R_t = I$ for all $t\geq0$. 

The conditional probability density of the state at time $t$, space $x_t$, given a history of measurements $y_\tau$, $0\leq \tau \leq t$, satisfies the Kushner--Stratonovich equation~\cite{Kushner:1964}. Let us define a class of probability densities $\mathcal{P}$ with respect to the Lebesque measure on $\mathcal{X}$ as $\mathcal{P} = \{p\in L^1: \int_\mathcal{X} p(x) \, dx = 1, p(x)\geq 0, \forall x \in \mathcal{X}\}$. In particular, let us consider the exponential family
\begin{align}
    \mathrm{EM}(c) \coloneqq \left\{ p \in \mathcal{P} \colon p(x) = \exp(c(x)^\top\theta - \psi(\theta)) \right\},\label{eq:EM_c}
\end{align}
where $\theta \in \Theta \subset \mathbb{R}^m$ is the natural parameter and $c\colon\mathbb{R}^d\to \mathbb{R}^m$ is a vector of natural statistics that are assumed to be linearly independent. The natural parameter space $\Theta$ is defined as
\begin{align}
    \Theta \coloneqq \left\{ \theta \in \mathbb{R}^m\colon \int_\mathcal{X} \exp(c(x)^\top \theta) dx <\infty \right\}. \label{eq:Theta}
\end{align}
An exponential family is said to be regular if $\Theta$ is an open subset of $\mathbb{R}^m$. In this work, we focus on regular exponential families. In the development of the proposed improved projection filter, we extensively use the cumulant-generating function (i.e., the log Laplace transform or log partition function \cite{brown1986,emzir2023}) defined by
\begin{align}
    \psi(\theta) = \log \left[ \int_\mathcal{X} \exp(c(x)^\top \theta) dx \right], \quad \theta \in \Theta.
    \label{eq:cumulant-generating}
\end{align}

Because the exponential family is assumed to be regular and the natural statistics are linearly independent, the exponential family is minimal \cite{kass1997,calin2014}. We recall the following standard result for a minimal regular exponential family \cite[Theorems 2.2.1 and 2.2.5]{kass1997}.
\begin{thm}\label{thm:kass_exponential_family_expectation}
    In a regular exponential family, the set $\Theta$ as defined in \eqref{eq:Theta} is convex. The cumulant-generating function $\psi(\theta)$ is strictly convex on $\Theta$ and it is differentiable up to an arbitrary order. The moments of the natural statistics $c_i(x)$, $i=1,\ldots,m$ exist for any order, and the expectations of
     $c_i$ and the corresponding Fisher information matrix $g$ are, respectively, given by,
    \begin{align}
        \E_\theta\left[ c_i \right] &= \pdv{\psi(\theta)}{\theta_i}, &
        g_{i,j}(\theta) &= \pdv[2]{\psi(\theta)}{\theta_i}{\theta_j}.
    \end{align}
    If the representation is minimal, then $g$ is positive definite.
\end{thm}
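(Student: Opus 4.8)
The plan is to establish the five claims in their natural dependency order, starting from the geometry of $\Theta$, proceeding to the analytic regularity of $\psi$, and closing with the probabilistic interpretation of its derivatives.

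First I would show that $\Theta$ is convex using H\"older's inequality. For $\theta_1,\theta_2\in\Theta$ and $\lambda\in(0,1)$, the integrand evaluated at $\lambda\theta_1+(1-\lambda)\theta_2$ factors as a product of the powers $[\exp(c(x)^\top\theta_1)]^\lambda$ and $[\exp(c(x)^\top\theta_2)]^{1-\lambda}$; applying H\"older with conjugate exponents $1/\lambda$ and $1/(1-\lambda)$ bounds $\int_\mathcal{X}\exp(c(x)^\top(\lambda\theta_1+(1-\lambda)\theta_2))\,dx$ by the product of the two finite normalizers, so the convex combination again lies in $\Theta$.

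The analytic core comes next. Since the family is regular, $\Theta$ is open, so every $\theta$ possesses a neighborhood contained in $\Theta$. I would show that the Laplace-type integral $Z(\theta)=\int_\mathcal{X}\exp(c(x)^\top\theta)\,dx$ admits differentiation under the integral sign to all orders on this neighborhood. The domination needed to interchange derivative and integral comes from the observation that, for interior $\theta$, a small perturbation $\theta\pm\epsilon e_i$ still lies in $\Theta$, and any polynomial in the components of $c(x)$ multiplied by $\exp(c(x)^\top\theta)$ is controlled by a constant times $\exp(c(x)^\top(\theta+\epsilon e_i))+\exp(c(x)^\top(\theta-\epsilon e_i))$, both integrable by convexity of $\Theta$. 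This single argument simultaneously yields existence of all moments of the $c_i$ and shows that $Z$, hence $\psi=\log Z$, is $C^\infty$ (indeed real-analytic) on $\Theta$. I expect this domination step to be the main obstacle, since every remaining claim is formal differentiation once the interchange is licensed.

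With differentiation under the integral justified, I would compute $\partial\psi/\partial\theta_i = Z^{-1}\int_\mathcal{X} c_i\,\exp(c^\top\theta)\,dx = \E_\theta[c_i]$, and differentiate once more via the quotient rule to obtain $\partial^2\psi/\partial\theta_i\partial\theta_j = \E_\theta[c_i c_j]-\E_\theta[c_i]\E_\theta[c_j]=\mathrm{Cov}_\theta(c_i,c_j)$, identifying the Fisher information $g(\theta)$ with the covariance matrix of the natural statistics. Finally, for any nonzero $a\in\mathbb{R}^m$ the quadratic form $a^\top g(\theta)a=\mathrm{Var}_\theta(a^\top c)$ is nonnegative and vanishes only if $a^\top c(x)$ is $p_\theta$-almost-everywhere constant; minimality, i.e.\ linear independence of $c_1,\dots,c_m$ together with the constant function $1$, rules this out, so $g$ is positive definite. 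A positive-definite Hessian on the convex open set $\Theta$ then gives strict convexity of $\psi$, completing the argument.
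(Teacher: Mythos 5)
The paper never proves this theorem: it is recalled verbatim as a standard result, with the proof delegated to the citation \cite[Theorems 2.2.1 and 2.2.5]{kass1997}, so there is no in-paper argument to compare against. Your proposal is the standard textbook proof---essentially the one in that reference: H\"older's inequality for convexity of $\Theta$, differentiation under the integral sign for smoothness of $\psi$ and the moment identities, the variance representation $a^\top g(\theta)a = \mathrm{Var}_\theta(a^\top c)$ plus minimality for positive definiteness, and a positive-definite Hessian on an open convex set for strict convexity. Your dependency ordering is also the correct one: strict convexity of $\psi$ genuinely requires minimality (a nontrivial $a$ with $a^\top c$ almost surely constant produces a flat direction of $\psi$), and the paper's standing assumptions---a regular family with linearly independent natural statistics, hence minimal---are exactly what license your final step, so deriving strict convexity \emph{after} positive definiteness, rather than attempting it directly from H\"older, is the right call.

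Two points deserve tightening, though neither is a genuine gap. First, your dominating function is stated for perturbations $\theta \pm \epsilon e_i$ along a single coordinate, which controls powers of a single $c_i$ via $|t|^k \le C_k(e^{\epsilon t} + e^{-\epsilon t})$; mixed monomials such as $c_i c_j$, which you need for the Hessian and for ``moments of all orders,'' require the slightly larger family of dominants $\sum_{s \in \{-1,1\}^m} \exp\left(c(x)^\top(\theta + \epsilon s)\right)$, obtained from $\prod_i |c_i|^{k_i} \le C \exp\left(\epsilon \sum_i |c_i|\right)$, all integrable for small $\epsilon$ since $\Theta$ is open. Second, you attribute the integrability of the perturbed exponentials to ``convexity of $\Theta$,'' but what you actually use there is openness (regularity); convexity enters separately, when dominating $\exp(c(x)^\top \theta')$ uniformly for $\theta'$ on a segment by the sum of the endpoint values while controlling the difference quotients. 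With those bookkeeping corrections your argument is complete and matches the cited source's approach.
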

Let us denote by $\mathcal{S} = \left\{ p_\theta: \theta \in \Theta \subseteq \mathbb{R}^{m} \right\}$ a class of parametric densities which does not have to be the exponential family. There are a few approaches that can be used to project the Kushner--Stratonovich equation onto the manifold of parametric densities \cite{brigo2022}. The standard way is to leverage the property that the square root of the density $\sqrt{p_t}$ belongs to $L^2$. Therefore, by requiring that the stochastic differential $d\sqrt{p_t}$ is an element of $L^2$ and  by expressing the Kushner--Stratonovich equation in its Stratonovich form, we can project $d\sqrt{p_t}$ onto the tangent space $T_{\sqrt{p_\theta}} \mathcal{S}^{1/2}$. This is elucidated in the following lemma \cite[Lemma 2.1]{brigo1999}.
\begin{lem}\label{lem:projection_to_S_half}
     Let $p_\theta \in \mathcal{S}$, and $u$ be a function such that $\mathbb{E}_{p_\theta}[\abs{u}^2]< \infty$. Then the projection of $v\coloneqq u \sqrt{p_\theta} \in L^2$ onto the tangent space $T_{\sqrt{p_\theta}} \mathcal{S}^{1/2}$ is given by
    \begin{align}
        \Pi_\theta v &= \sum_{i=1}^m \sum_{j=1}^m 4 g^{-1}_{ij} \expval{v, \frac{1}{2\sqrt{p_\theta}}\pdv{p_\theta}{\theta_j}}\frac{1}{2\sqrt{p_\theta}}\pdv{p_\theta}{\theta_i}, \label{eq:Projection_to_S_half}
    \end{align}    
    where the inner product is defined as $\expval{u,v} = \int u(x) \, v(x) \, dx$.  
\end{lem}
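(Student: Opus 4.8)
The plan is to prove Lemma~\ref{lem:projection_to_S_half} by a direct orthogonal-projection computation in the Hilbert space $L^2$, using the standard fact that the projection onto a finite-dimensional subspace is the unique linear combination of a spanning set whose coefficients are obtained by inverting the Gram matrix of that spanning set. First I would identify an explicit basis for the tangent space $T_{\sqrt{p_\theta}}\mathcal{S}^{1/2}$. Since a curve $\theta\mapsto\sqrt{p_\theta}$ traces out the manifold $\mathcal{S}^{1/2}$, differentiating with respect to each coordinate $\theta_i$ shows that the tangent space is spanned by the vectors
\begin{align}
    e_i := \pdv{\sqrt{p_\theta}}{\theta_i} = \frac{1}{2\sqrt{p_\theta}}\pdv{p_\theta}{\theta_i}, \qquad i=1,\ldots,m. \label{eq:tangent_basis}
\end{align}
So the projection $\Pi_\theta v$ must be of the form $\sum_i a_i\, e_i$ for some coefficients $a_i$ depending on $v$.

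Next I would compute the Gram matrix $G_{ij}=\expval{e_i,e_j}$ of this spanning set and relate it to the Fisher information matrix $g$. Writing out the inner product of the two vectors in \eqref{eq:tangent_basis} gives $\expval{e_i,e_j}=\frac14\int \frac{1}{p_\theta}\pdv{p_\theta}{\theta_i}\pdv{p_\theta}{\theta_j}\,dx$, which I recognize as $\tfrac14$ of the Fisher information matrix $g_{ij}$ in its score-function form; this uses only that the $p_\theta$ are positive densities and differentiation under the integral is justified, which for the exponential family follows from the regularity granted by Theorem~\ref{thm:kass_exponential_family_expectation} (in particular $g$ is positive definite, hence invertible). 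Thus $G=\tfrac14 g$ and $G^{-1}=4g^{-1}$.

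The defining property of the orthogonal projection is that $v-\Pi_\theta v$ is orthogonal to every basis vector, i.e.\ $\expval{v,e_k}=\expval{\Pi_\theta v,e_k}=\sum_i a_i\,\expval{e_i,e_k}$ for all $k$. Writing this as the linear system $\expval{v,e_k}=\sum_i G_{ki}\,a_i$ and solving by inverting $G$ yields $a_i=\sum_j (G^{-1})_{ij}\expval{v,e_j}=\sum_j 4g^{-1}_{ij}\expval{v,e_j}$. Substituting $e_i$ and $e_j$ from \eqref{eq:tangent_basis} back into $\Pi_\theta v=\sum_i a_i e_i$ then reproduces exactly the double sum in \eqref{eq:Projection_to_S_half}. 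I would also note at the outset that the hypothesis $\mathbb{E}_{p_\theta}[\abs{u}^2]<\infty$ guarantees $v=u\sqrt{p_\theta}\in L^2$ since $\int \abs{v}^2\,dx=\int\abs{u}^2 p_\theta\,dx=\mathbb{E}_{p_\theta}[\abs{u}^2]<\infty$, and that finiteness of $\expval{v,e_j}$ follows by Cauchy--Schwarz together with the finiteness of the Fisher information.

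The main obstacle I anticipate is not the algebra of the Gram-matrix inversion, which is routine, but the analytic justification that the tangent space is genuinely finite-dimensional with $\{e_i\}$ a basis, and that the formal differentiation-under-the-integral steps producing both the tangent vectors and the Fisher-information identity are valid. For a general parametric class $\mathcal{S}$ these are nontrivial regularity questions; here they are controlled by restricting to (or leaning on the smoothness and positivity guaranteed for) the regular minimal exponential family, so the cleanest route is to invoke Theorem~\ref{thm:kass_exponential_family_expectation} to secure differentiability to arbitrary order, existence of the relevant moments, and positive-definiteness (hence invertibility) of $g$, after which the projection formula follows purely from finite-dimensional Hilbert-space geometry.
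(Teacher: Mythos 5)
Your proof is correct, but note that the paper itself offers no proof of this lemma: it is imported verbatim as \cite[Lemma 2.1]{brigo1999}, so the comparison is really with the standard argument underlying that reference --- which is exactly what you give. Identifying $e_i = \pdv*{\sqrt{p_\theta}}{\theta_i} = \frac{1}{2\sqrt{p_\theta}}\pdv*{p_\theta}{\theta_i}$ as a spanning set of $T_{\sqrt{p_\theta}}\mathcal{S}^{1/2}$, computing the Gram matrix as $\expval{e_i,e_j} = \frac14\int p_\theta\,\pdv*{\log p_\theta}{\theta_i}\,\pdv*{\log p_\theta}{\theta_j}\,dx = \frac14 g_{ij}$, and solving the normal equations $\expval{v,e_k}=\sum_i G_{ki}a_i$ yields precisely the coefficients $4g^{-1}_{ij}$ in \eqref{eq:Projection_to_S_half}; the $L^2$ membership of $v$ and the finiteness of $\expval{v,e_j}$ via Cauchy--Schwarz are handled exactly as you say. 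One small caveat, which you yourself flagged: the lemma is stated for a general parametric class $\mathcal{S}$, whereas your regularity justification via Theorem~\ref{thm:kass_exponential_family_expectation} covers only the regular minimal exponential family. For general $\mathcal{S}$, the $L^2$-differentiability of $\theta\mapsto\sqrt{p_\theta}$, the linear independence of the $e_i$, and the invertibility of $g$ are standing assumptions on the family (as in the cited source) rather than consequences of anything proved here; this narrowing costs nothing in the present paper, since the lemma is only ever applied to the exponential family $\mathrm{EM}(c)$, where Theorem~\ref{thm:kass_exponential_family_expectation} does supply all the needed regularity.
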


For the exponential family $\text{EM}(c)$, the projection filter using Stratonovich projection is given by \cite{brigo1999}:
\begin{equation}
	\begin{split}
		d\theta_t &= g(\theta_t)^{-1}\E_{\theta_t}\left[ \mathcal{L}\left[c\right]  - \frac{1}{2}h^\top h \left[ c - \eta(\theta_t) \right] \right] dt\\
		&\quad + g(\theta_t)^{-1} \sum_{k=1}^{d_y} \E_{\theta_t}\left[ h_k\left[ c-\eta(\theta_t) \right] \right] \circ dy_{t,k}.\label{eq:Brigo_dTheta_EM}
	\end{split}
\end{equation}
In the equation above, $\E_{\theta}$ is the expectation with respect to the parametric probability density $p_\theta$, $h_k$ is the $k$-th element of $h$, $\theta_t\mapsto\eta(\theta_t)\coloneqq\E_{\theta_t}[c]$, $\mathcal{L}$ is the backward Kolmogorov diffusion operator, and
$\circ$ denotes the Stratonovich multiplication. 

We can now extend the class of state-space models considered in \cite{emzir2023} as follows. Let $\mathscr{P}$ be a set of smooth functions $\varphi\colon\mathbb{R}^d\to \mathbb{R}$ such that $\mathscr{P}$ is a finite-dimensional vector space that is closed under partial differentiation with respect to $x_1,\ldots,x_d$. Examples of sets $\mathscr{P}$ are polynomials on $x_1,\ldots,x_d$ with a total order less or equal to some $k\in\mathbb{N}$ and functions of the form $f(x) = \sum_{j=1}^{n_k} a_j \text{cos}(k_j^\top x) + b_j \text{sin}(\ell_j^\top x),$ with $a_j,b_j \in \mathbb{R}, k_j,\ell_j \in \mathbb{Z}^d,$ and $n_k\in \mathbb{N}$. We use the following assumptions on the model \eqref{eqs:nonlinear_SDE}.
\begin{asm}\label{asm:f_h_sigma_is_polynomial}
	Elements of functions $f, h,$ and $\varrho\varrho^\top$ belong to $\mathscr{P}$. 
\end{asm}
\begin{asm}\label{asm:c_monomials}
	The natural statistics $\left\{ c_i \right\}$  are selected as linearly independent elements of $\mathscr{P}$.
\end{asm}
\begin{asm}\label{asm:EM_c_ast}
	Each element of $h$ is in the span of $\left\{ 1, c_1, \ldots, c_m \right\}$. This means there exists $\lambda_k \in \mathbb{R}^{d_y}$ for $k=0,\ldots,m$, such that $h = \lambda_0 + \sum_{k=1}^{d_y}\lambda_k c_k$.
\end{asm}

Under Assumption \ref{asm:EM_c_ast}, the resulting exponential family of probability densities is known as the EM$(c^\ast)$ family \cite{brigo1999}. 
For this specific family, the projection filter equation reduces to \cite[Theorem 6.3]{brigo1999}
\begin{equation}
	d\theta_t = g(\theta_t)^{-1}\E_{\theta_t}\biggl[ \mathcal{L}\left[c\right]  - \frac{1}{2}(h^\top h) \left[ c - \eta(\theta_t) \right] \biggr] dt + \sum_{k=1}^{d_y} \lambda_k  dy_{t,k}, \label{eq:Brigo_dTheta_EM_c_ast}
\end{equation}

Assumptions \ref{asm:f_h_sigma_is_polynomial} and \ref{asm:c_monomials} ensure that every element of $\{ \mathcal{L}[c], h^\top h, h^\top hc\}$ belongs to a larger vector space $\tilde{\mathscr{P}}$ spanned by the basis of $\mathscr{P}$ and a finite number of multiplications of basis functions of $\mathscr{P}$. 
Explicitly, we can write $\mathcal{L}[c] - \frac{1}{2}(h^\top h)c = a_0 + A_0 \tilde{c}$ and $\frac{1}{2}(h^\top h) = b_0 + b_h^\top\tilde{c}$, where $a_0 \in \mathbb{R}^{m}$, $A_0 \in \mathbb{R}^{m\times(m+m_h)}$, $b_0 \in \mathbb{R}$,  $b_h \in \mathbb{R}^{(m+m_h)}$, and $\tilde{c}^\top = [c^\top, c_h^\top]$. Note that $c_h:\mathbb{R}^d\rightarrow \mathbb{R}^{m_h}$ is the vector of the remaining statistics of $x$ that are linearly independent of the elements of $c$. Therefore, under these assumptions, Equation \eqref{eq:Brigo_dTheta_EM_c_ast} can be expressed as
\begin{align}
	d\theta_t = g(\theta_t)^{-1}\left[ a_0 + b_0 \eta(\theta_t) + M(\theta_t) \tilde{\eta}(\theta_t)\right] dt + \lambda dy_t,
	\label{eq:Brigo_dTheta_EM_c_ast_polynomials}
\end{align}
where $\lambda = [\lambda_1,\ldots,\lambda_m]^\top$, $M(\theta_t) =  A_0 + \eta(\theta_t) b_h^\top$, and $\tilde{\eta}(\theta_t)\coloneqq \mathbb{E}_{\theta_t}[\tilde{c}]$.

In order to solve \eqref{eq:Brigo_dTheta_EM_c_ast_polynomials}, we need to compute the expectation $\tilde{\eta}$ and the Fisher metric $g$. In the automatic projection filter \cite{emzir2023}, the Fisher metric $g$ is obtained by automatic differentiation of the approximated cumulant-generating function via Theorem~\ref{thm:kass_exponential_family_expectation}. The cumulant-generating function can approximated via Gauss--Chebyshev quadrature in the univariate case or via sparse-grid integration methods in the multivariate case. For the expected values of the extended statistics $\tilde{\eta}$, the automatic projection filter uses the following lemma to calculate the remaining expectations in \eqref{eq:Brigo_dTheta_EM_c_ast_polynomials} \cite[Proposition 5.2]{emzir2023}.
\begin{lem}\label{lem:ExpectationByExtension}
    Let $s(x)\colon \mathbb{R}^d \rightarrow \mathbb{R}$ be a statistic, linearly independent of the natural statistics $c(x)$. If there exists an open neighborhood about the zero $\Theta_0$ such that for $\tilde{c} \coloneqq [c^\top(x) \; s(x)]^\top$, the quantity $\tilde{\psi}(\tilde{\theta}) \coloneqq \log(\int_\mathcal{X} \exp(\tilde{c}^\top \tilde{\theta}) dx)< \infty$ for any $\tilde{\theta} \in \Theta \times \Theta_0$, then for any $\theta \in \Theta$
    \begin{equation*}
        \mathbb{E}_\theta [s] = \left.\pdv{\tilde{\psi}(\tilde{\theta})}{\tilde{\theta}_{m+1}}\right|_{\tilde{\theta} = \theta_\ast}, \quad \theta_\ast = \mqty[\theta \\ 0].
    \end{equation*}
\end{lem}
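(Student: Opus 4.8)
The plan is to recognise the right-hand side as an instance of the first expectation identity in Theorem~\ref{thm:kass_exponential_family_expectation}, applied not to the original family but to the \emph{extended} exponential family $\mathrm{EM}(\tilde{c})$ with natural statistics $\tilde{c} = [c^\top, s]^\top$ and cumulant-generating function $\tilde{\psi}$. First I would note that, since $s$ is linearly independent of $c$, the statistics $\tilde{c}$ are linearly independent and $\mathrm{EM}(\tilde{c})$ is a minimal representation. Its natural parameter space contains the set $\Theta \times \Theta_0$, on which $\tilde{\psi}$ is finite by hypothesis; because $\Theta$ is open (the original family being regular) and $\Theta_0$ is an open neighbourhood of zero, this set is open, so restricting to it yields a regular exponential family to which Theorem~\ref{thm:kass_exponential_family_expectation} applies directly. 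Crucially, for every $\theta \in \Theta$ the point $\theta_\ast = [\theta^\top, 0]^\top$ lies in this open set.

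Next I would apply the expectation formula of Theorem~\ref{thm:kass_exponential_family_expectation} to $\mathrm{EM}(\tilde{c})$ at the interior point $\theta_\ast$. Since $\theta_\ast$ is interior, $\tilde{\psi}$ is differentiable there and differentiation under the integral sign is valid, giving
\begin{align*}
    \left.\pdv{\tilde{\psi}(\tilde{\theta})}{\tilde{\theta}_{m+1}}\right|_{\tilde{\theta} = \theta_\ast} = \frac{\int_\mathcal{X} s(x)\, \exp(\tilde{c}(x)^\top \theta_\ast)\, dx}{\int_\mathcal{X} \exp(\tilde{c}(x)^\top \theta_\ast)\, dx}.
\end{align*}
The remaining step is to simplify the integrand at $\theta_\ast$. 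Setting the last coordinate to zero removes $s$ from the exponent, so that $\tilde{c}(x)^\top \theta_\ast = c(x)^\top \theta$ and hence $\tilde{\psi}(\theta_\ast) = \psi(\theta)$; equivalently, the extended density evaluated at $\theta_\ast$ coincides with the original density $p_\theta$. Substituting this into the quotient reduces it to $\int_\mathcal{X} s(x)\, p_\theta(x)\, dx = \mathbb{E}_\theta[s]$, which is the desired identity.

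The step I expect to be the main obstacle is justifying that the derivative of $\tilde{\psi}$ in the $(m+1)$-th direction genuinely equals the expectation of $s$, rather than a merely one-sided directional quantity, at the value $\theta_{m+1} = 0$. This is exactly where the hypothesis on the open neighbourhood $\Theta_0$ of zero does the work: it ensures that $\theta_{m+1} = 0$ is an interior rather than a boundary point of the extended parameter space, so that the smoothness and moment-existence conclusions of Theorem~\ref{thm:kass_exponential_family_expectation} carry over verbatim to $\mathrm{EM}(\tilde{c})$ and no further integrability estimates are needed beyond the finiteness of $\tilde{\psi}$ already assumed.
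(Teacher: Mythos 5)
Your proof is correct: the paper itself states this lemma without proof, importing it as Proposition 5.2 of \cite{emzir2023}, and your argument---treating $\mathrm{EM}(\tilde{c})$ as a minimal exponential family whose cumulant-generating function $\tilde{\psi}$ is finite on the open set $\Theta\times\Theta_0$, so that $\theta_\ast = [\theta^\top\; 0]^\top$ is an interior point at which Theorem~\ref{thm:kass_exponential_family_expectation} (differentiation under the integral sign) applies, and then observing that the last coordinate being zero collapses $\exp(\tilde{c}^\top\theta_\ast)$ to $\exp(c^\top\theta)$ so the quotient becomes $\mathbb{E}_\theta[s]$---is exactly the standard derivation on which the cited result rests. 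You also correctly identify the crux: the openness of $\Theta_0$ is precisely what makes $\tilde{\theta}_{m+1}=0$ an interior rather than boundary point, so the partial derivative is a genuine two-sided derivative and no integrability beyond the assumed finiteness of $\tilde{\psi}$ is needed.
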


\section{Adaptive Parametric Bijections}\label{sec:ParametricBijections}

In this section, we present the key contributions of this paper. Before doing so, let us first see the difficulties for adopting a static bijection, as was done in \cite{emzir2023}.
\subsection{Challenges of static bijection}\label{sec:challenges}
Although the automatic projection filter has been shown to perform well with multivariate dynamics \cite{emzir2023}, it unfortunately requires a large number of quadrature nodes. This is due to the need for the accurate computation of the exponential of the cumulant-generating function \eqref{eq:cumulant-generating}, which is done via numerical integration. To compute the cumulant-generating function for a parametric density $p_\theta$ a fixed smooth bijection $\phi\colon \mathcal{D} \rightarrow \mathcal{X}$ is used, where $\mathcal{D} \coloneqq (-1,1)^d$ is the canonical hypercube. Because during filtering, the high-density region of the filtering density moves within $\mathbb{R}^d$, fixing the bijection can lead to numerical instabilities. To illustrate this, let us denote by $\tilde{X} = \{\tilde{x}_i\}_{i=1}^N$ the set of quadrature nodes in the canonical hypercube and consider the univariate case with $p_\theta(x) = \frac{1}{\sqrt{2\pi}} \exp(-(x-\theta)^2/2)$ and $\phi=\tanh^{-1}(\tilde{x})$. In the domain $\mathcal{D}$, when a substantial part of the bijected nodes $\phi(\tilde{X})$ lies outside the high-density interval, the function $\phi'(\tilde{x}) \, p_\theta(\phi(\tilde{x}))$ moves to edge of $\mathcal{D}$, and is almost zero at the other edge. Fig.~\ref{fig:bijection_flaw_16} illustrates this effect with $\theta$ being either $0$ or $\pi/2$, and by using only 16 quadrature nodes, $\left\{ \phi(\tilde{x}_i) \right\}$ does not cover the region of high $p_\theta$ value as $\theta$ moves away from zero. Even worse, when $\theta$ is far from zero, increasing the number of quadrature nodes does not noticeably decrease the integration error. We show how to address this problem in the next section.

\begin{figure} 
    \centering
    \subfloat[$\theta=0$]{\includegraphics[width=0.25\textwidth]{./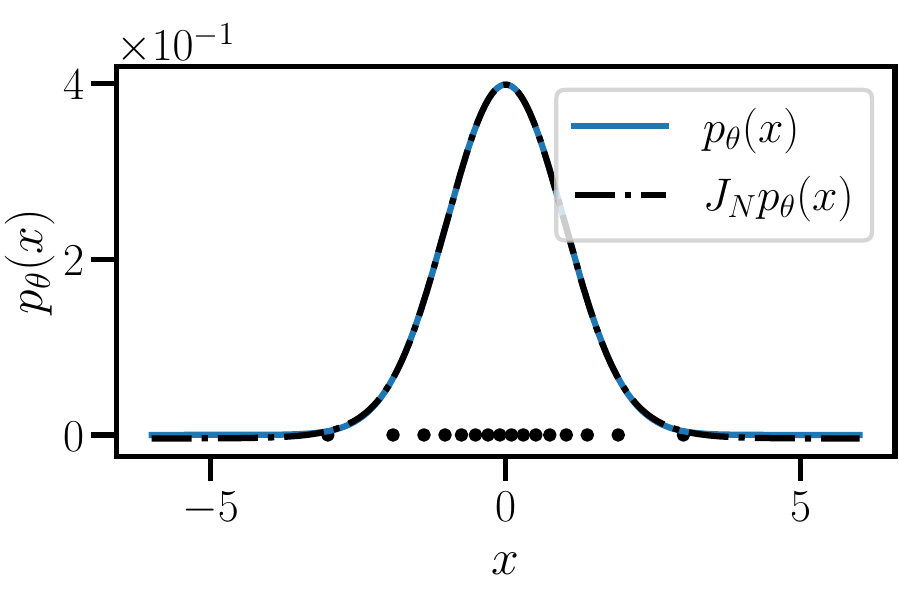}}
    \subfloat[$\theta=0$]{\includegraphics[width=0.25\textwidth]{./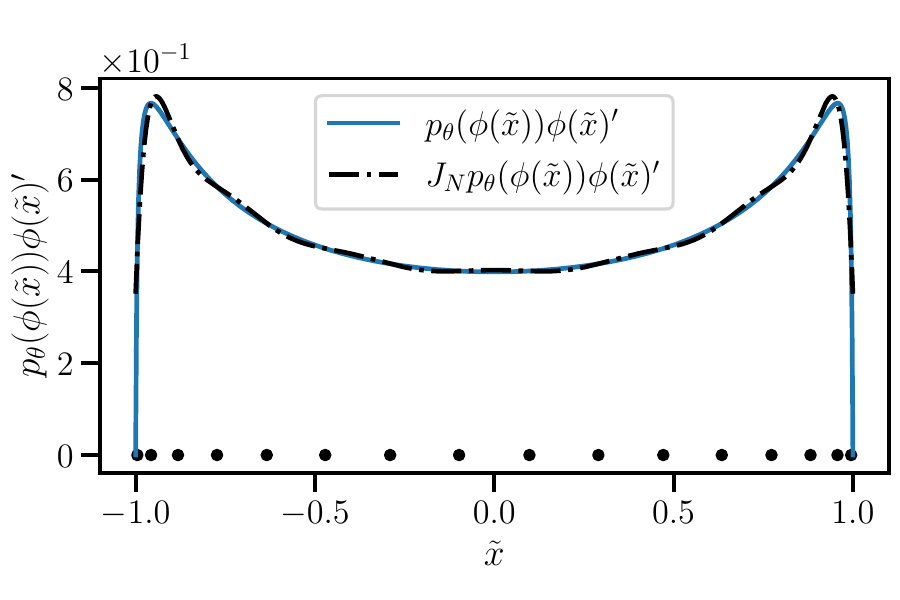}}\\
    \subfloat[$\theta=\pi/2$]{\includegraphics[clip,width=0.25\textwidth]{./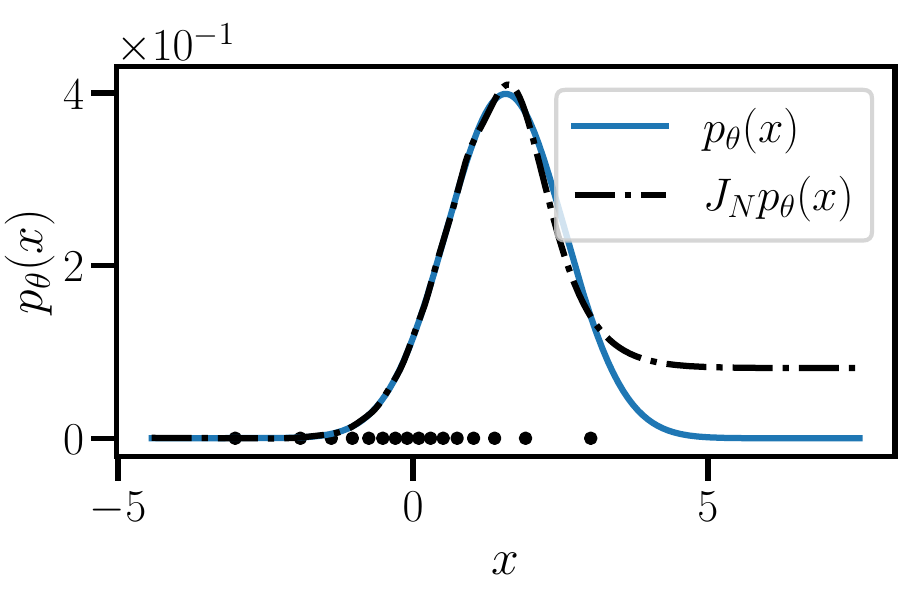}}
    \subfloat[$\theta=\pi/2$]{\includegraphics[clip,width=0.25\textwidth]{./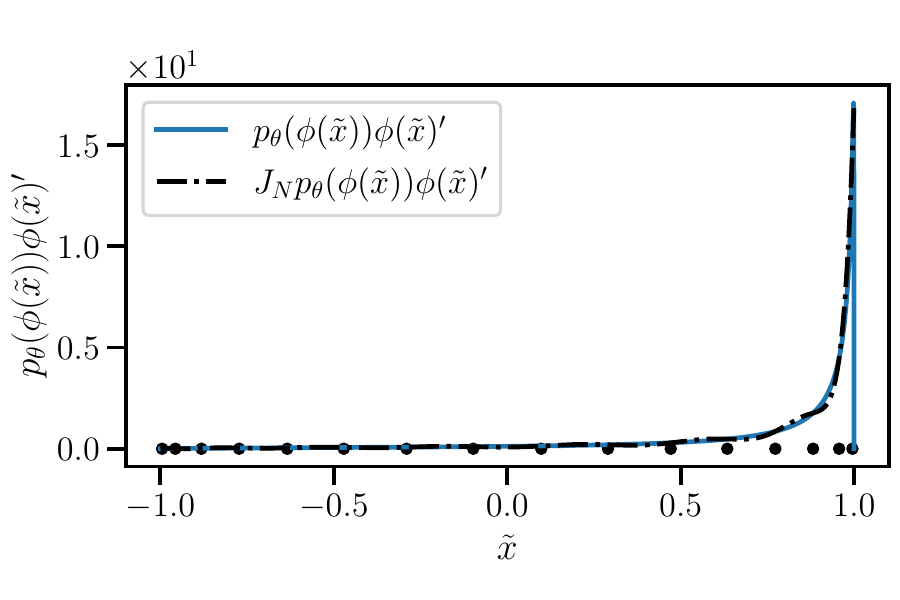}}

    \caption{Illustration of 16 bijected Gauss--Chebyshev's quadrature nodes, parametric density $p_\theta$, and interpolated approximation using Chebyshev's nodes with the bijection $\phi=\tanh^{-1}$. Figs. (a) and (c) are in the original sample space $\mathcal{X}$ and Figs. (b) and (d) are in the bijected sample space $\phi(\mathcal{X})$. When $\theta$ is shifted quite far from zero, the quadrature nodes do not entirely cover most of the high-density region of $p_\theta$.}
    \label{fig:bijection_flaw_16}
\end{figure}

\subsection{First Gaussian-Based Parametric Bijection}\label{sec:first_bijection}
The aforementioned challenges motivates us to propose a new parametric bijection $\phi_\xi$, where the bijection parameter $\xi\in \mathbb{R}^{n_\xi}, n_\xi \in \mathbb{N}$ can be used to reduce the numerical integration error. Let us first consider the univariate case and introduce a parametric probability density $q_\xi$ with support $\mathbb{R}$. Then
    $\zeta_\xi(x) \coloneqq 2 \int_{-\infty}^x q_\xi(y) dy - 1 $ %
is a valid bijection from $\mathbb{R} \rightarrow \mathcal{D}$. %

Now let us choose an exponential family manifold with $m$ natural parameters.
Choosing $\phi_\xi = \zeta_\xi^{-1}$, we can rewrite the exponential {of the cumulant-generating function} as
\begin{align}
    \exp(\psi(\theta)) &= 
     \int_\mathcal{D} \exp(c(\phi(\tilde{x}))^\top \theta) \dfrac{1}{2 q_\xi(\phi(\tilde{x}))} d\tilde{x}. \label{eq:integration_in_canonical_cube}
\end{align}

Instead of directly numerically computing this function using its definition \eqref{eq:cumulant-generating}, it is beneficial to use this transformed integral representation instead. The accuracy of the quadrature approximation to this integral depends on the choice of $q_\xi$. Recall that a Gaussian quadrature rule with $N$ quadrature nodes is exact when the integrand is chosen to be a polynomial of order $2N-1$ or less \cite{gautschi2004}. Therefore, in order to get an accurate integration result, we can choose a $q_\xi$ such that the integrand is close to a polynomial of order $2N-1$ or less. In fact, if we chose $q_\xi$ to be exactly $p_\theta$, then we would have the integrand equal to $\frac{1}{2} \exp(\psi(\theta))$. This would mean that the integration result would be exact, thus avoiding any integration errors. 
However, in the following proposition we show that there is no such a bijection that can be evaluated explicitly and makes the integrand in \eqref{eq:integration_in_canonical_cube} a polynomial.

\begin{prop}\label{prop:bijection_polynomial}
    Let $\phi$ be a bijection from $\mathcal{D}$ to $\mathbb{R}$ and let $\mathrm{EM}(c)$ be the exponential family given by \eqref{eq:EM_c} with parameters $\theta \in \Theta$, where $\Theta$ is given by \eqref{eq:Theta}. If there is no explicit antiderivative for $\beta(x)\coloneqq\exp\bigl(c(x)^\top\theta\bigr)$ as a function of $x$, then there is no explicit bijection $\phi$ such that $\beta(\phi(\tilde{x})) \, \phi(\tilde{x})'$ is a polynomial of $\tilde{x}$.
\end{prop}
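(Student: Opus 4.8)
The plan is to prove the contrapositive: assuming there exists an explicit bijection $\phi$ for which $\beta(\phi(\tilde{x})) \, \phi'(\tilde{x})$ equals some polynomial $P(\tilde{x})$, I will exhibit an explicit antiderivative of $\beta$. The mechanism is just the change-of-variables formula read backwards: the substitution $x = \phi(\tilde{x})$ turns $\int \beta(x)\,dx$ into $\int \beta(\phi(\tilde{x}))\,\phi'(\tilde{x})\,d\tilde{x} = \int P(\tilde{x})\,d\tilde{x}$, and the antiderivative of a polynomial is again a polynomial, hence explicit.

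Concretely, first let $\tilde{P}$ be the polynomial antiderivative of $P$ (with zero constant term), obtained by the elementary rule $\int \tilde{x}^k\,d\tilde{x} = \tilde{x}^{k+1}/(k+1)$. Next I would form the candidate antiderivative $B := \tilde{P}\circ \phi^{-1}$ and verify by direct differentiation that $B' = \beta$. Using the chain rule together with the inverse-function derivative $(\phi^{-1})'(x) = 1/\phi'(\phi^{-1}(x))$, one computes
\[
B'(x) = \tilde{P}'(\phi^{-1}(x))\,(\phi^{-1})'(x) = P(\phi^{-1}(x)) \, \frac{1}{\phi'(\phi^{-1}(x))}.
\]
Substituting $\tilde{x} = \phi^{-1}(x)$ into the defining identity $P(\tilde{x}) = \beta(\phi(\tilde{x}))\,\phi'(\tilde{x})$ gives $P(\phi^{-1}(x)) = \beta(x)\,\phi'(\phi^{-1}(x))$, and the two factors $\phi'(\phi^{-1}(x))$ cancel, leaving $B'(x) = \beta(x)$ exactly. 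Since $\phi$ is an explicit bijection, its inverse $\phi^{-1}$ is explicit, so $B = \tilde{P}\circ\phi^{-1}$ is an explicit antiderivative of $\beta$, contradicting the hypothesis.

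The step I expect to carry the real weight is not the calculus, which is routine, but pinning down what \emph{explicit} is allowed to mean, since the whole argument hinges on the class of admissible closed-form expressions being closed under functional inversion and under composition with polynomials. If \emph{explicit} is read in the elementary-function or Liouvillian sense, inversion can escape the class, so I would state the result relative to a fixed class of expressions assumed closed under these two operations; this is the natural setting here, since $\phi = \zeta_\xi^{-1}$ is already defined as the inverse of the explicit map $\zeta_\xi$. The remaining minor points to check are that $\phi$ being a bijection of $\mathcal{D}$ onto $\mathbb{R}$ forces $\phi'$ to be nonvanishing, so the inverse-derivative formula is valid, and that $\beta = \exp(c^\top\theta) > 0$ ensures $B$ is a genuine antiderivative rather than a piecewise construction.
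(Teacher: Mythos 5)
Your proof is correct and is essentially the paper's own argument read in the contrapositive direction: both rest on the substitution $x = \phi(\tilde{x})$ converting $\int \beta(x)\,dx$ into the integral of a polynomial, the paper integrating the identity $\beta(\phi(\tilde{x}))\,\phi'(\tilde{x}) = \rho_n(\tilde{x})$ from the endpoint $-1$ while you differentiate the candidate $\tilde{P}\circ\phi^{-1}$ --- the same identity read in opposite directions. Your caveat that ``explicit'' must denote a class of expressions closed under functional inversion and composition with polynomials is a point the paper's one-line conclusion silently assumes, so your treatment is, if anything, slightly more careful.
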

\begin{proof}
    Suppose $\beta(\phi(\tilde{x})) \, \phi(\tilde{x})' = \rho_n(\tilde{x})$, where $\rho_n$ is a polynomial of order $n$ in $\tilde{x}$. Integrating both sides gives
    \begin{align*}
        \lim_{s_\ast \downarrow -1 }\int_{s=s_\ast}^{\tilde{x}}
        \beta(\phi(s)) \dv{\phi(s)}{s} \, ds &= \lim_{s_\ast \downarrow -1 }\int_{s=s_\ast}^{\tilde{x}} \rho_n(s) \, ds,\\
        \lim_{s_\ast \downarrow -1 }\int_{\phi=\phi(s_\ast)}^{\phi(\tilde{x})}
        \beta(\phi) \, d\phi &= \int_{s=-1}^{\tilde{x}} \rho_n(s) \, ds
        =\bar{\rho}_n(\tilde{x}),
    \end{align*}
    where $\bar{\rho}_n$ is a polynomial because $\rho_n$ is. Since the antiderivative of $\beta$ has no explicit form, there is no explicit form for $\phi$ neither.
\end{proof}

Even if the explicit antiderivative of $\beta(x)$ does exist, that is, $\int \beta(x) \, dx =\bar{\beta}(x) + C$, then solving for the bijection $\phi$ might still be impossible as $\bar{\beta}(\phi)$ might not be invertible. Therefore, instead of aiming at zero integration error, we focus on finding a parametric bijection, or a family of them, which leads to a smaller integration error. For this purpose, let us use a $d$-dimensional numerical quadrature with a positive weight function $\omega(\tilde{x})$ and with $N$ quadrature nodes as
\begin{equation}
    Q^{d,\omega}_N g \coloneqq \int_\mathcal{D} g(\tilde{x}) \, \omega(\tilde{x}) \, d\tilde{x} \approx \sum_{i=1}^N w_i \, g(\tilde{x}_i),
    \label{eq:Numerical_integration}
\end{equation}
where $\tilde{x}_i$ are the nodes and $w_i$ are the weights of the quadrature rule. The weighting function $\omega(\tilde{x})$ varies for different quadrature schemes. 
It is well-known that for a univariate Gauss-type quadrature, the quadrature nodes are distinct, and the weights $\{w_i\}_{i=1}^N$ are positive \cite[Theorem 1.46]{gautschi2004}. 
As a generalization of one-dimensional bijection $\phi_\xi$, where $\pdv{\phi_\xi}{\tilde{x}}=(2 q_\xi(x))^{-1}$, in multivariate context, the bijection operator (also denoted as $\phi_\xi$) is constructed from $q_\xi$, where $\abs{\det \pdv{\phi_\xi}{\tilde{x}}} = (2^d q_\xi(x))^{-1}$.
By using the quadrature rule, we can approximate the expectation of an arbitrary function $f(x)$ with respect to the density $p_\theta$ by
\begin{equation}
    \expvalb{\theta,N}{f;\xi} \coloneqq Q^{d,\omega}_N \left[ f(\phi_\xi) u(\phi_\xi) \omega^{-1}\right] \label{eq:Numerical_expectation} %
\end{equation}
where $u(x) \coloneqq p_\theta(x) / ( 2^d q_\xi(x) )$. 

The operator $f \mapsto \E_{\theta,N}[f;\xi]$ is linear, but it does not guarantee that $\E_{\theta,N}[1;\xi]=1$. The following lemma lists four important properties of this operator that we will use in the subsequent developments.

\begin{lem}\label{lem:numerical_expectation_identities}
    If the approximation of the cumulant-generating function using $N$ quadrature nodes is given by
        $\psi(\theta)^{(N)} = \log(Q^{d,\omega}_N\left[ \exp(c(\phi_\xi)^\top\theta) \abs{\det \pdv{\phi_\xi}{\tilde{x}}} \omega^{-1} \right])$, %
    then the following hold:
    \begin{enumerate}
        \item $\expvalb{\theta,N}{1;\xi} = \dfrac{\exp(\psi(\theta)^{(N)})}{\exp(\psi(\theta))}$,
        \item $\dfrac{\partial \psi(\theta)^{(N)}}{\partial \theta_j} = \dfrac{\expvalb{\theta,N}{c_j;\xi}}{\expvalb{\theta,N}{1;\xi}}$,
        \item $\dfrac{\partial^2 \psi(\theta)^{(N)}}{\partial \theta_j \partial \theta_k} = \dfrac{\expvalb{\theta,N}{c_j c_k;\xi}}{\expvalb{\theta,N}{1;\xi}} - \dfrac{\expvalb{\theta,N}{c_j;\xi}}{\expvalb{\theta,N}{1;\xi}} \dfrac{\expvalb{\theta,N}{c_k;\xi}}{\expvalb{\theta,N}{1;\xi}} $,
        \item $\dfrac{\expvalb{\theta,N}{f;\xi}}{\expvalb{\theta,N}{1;\xi}} = \sum_{i=1}^N w_i f(\phi_\xi(\tilde{x}_i))\\
        \times \exp(c(\phi(\tilde{x}_i)^\top \theta - \psi(\theta)^{(N)})) \abs{\det \pdv{\phi_\xi}{\tilde{x}}} \omega(\tilde{x}_i)^{-1}$.
    \end{enumerate}
\end{lem}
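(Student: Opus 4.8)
The plan is to reduce all four identities to a single algebraic observation about the integrand weight $u$, after which everything becomes differentiation of a finite sum plus careful bookkeeping of one scalar factor. The starting point is to rewrite $u$ in terms of the natural statistics. By definition $u(x) = p_\theta(x)/(2^d q_\xi(x))$, and combining $\abs{\det \pdv{\phi_\xi}{\tilde{x}}} = (2^d q_\xi(x))^{-1}$ with the exponential-family form $p_\theta(x) = \exp(c(x)^\top\theta - \psi(\theta))$, I would first establish the key identity
\[
u(\phi_\xi(\tilde{x})) = \exp\!\big(c(\phi_\xi(\tilde{x}))^\top\theta - \psi(\theta)\big)\,\abs{\det \pdv{\phi_\xi}{\tilde{x}}}.
\]
Every subsequent step is just tracking the constant factor $\exp(-\psi(\theta))$ carried by this identity and distinguishing it from the quadrature surrogate $\psi(\theta)^{(N)}$.

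For property (1), I would substitute this identity into $\expvalb{\theta,N}{1;\xi} = Q^d_N[u(\phi_\xi)\,\omega^{-1}]$, expand $Q^d_N$ as the weighted node sum, and pull the constant $\exp(-\psi(\theta))$ outside; the remaining sum is exactly $\exp(\psi(\theta)^{(N)})$ by the stated definition of $\psi(\theta)^{(N)}$, which gives the quotient $\exp(\psi(\theta)^{(N)})/\exp(\psi(\theta))$.

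For properties (2) and (3), I would work with $Z^{(N)}(\theta):=\exp(\psi(\theta)^{(N)})$, which is a finite sum of terms $w_i \exp(c(\phi_\xi(\tilde{x}_i))^\top\theta)\abs{\det \pdv{\phi_\xi}{\tilde{x}}}\omega(\tilde{x}_i)^{-1}$. Because it is a finite sum of smooth functions of $\theta$, differentiation under the sum is immediate, and each $\partial_{\theta_j}$ simply brings down a factor $c_j(\phi_\xi(\tilde{x}_i))$. Comparing the result with the definition of the numerical expectation, and using property (1) to supply the $\exp(-\psi(\theta))$ normalization, shows $\partial_{\theta_j} Z^{(N)} = \exp(\psi(\theta))\,\expvalb{\theta,N}{c_j;\xi}$ and likewise $\partial^2_{\theta_j\theta_k} Z^{(N)} = \exp(\psi(\theta))\,\expvalb{\theta,N}{c_j c_k;\xi}$. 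Since $\psi(\theta)^{(N)} = \log Z^{(N)}$, the elementary log-derivative relations $\partial_{\theta_j}\log Z = Z^{-1}\partial_{\theta_j}Z$ and $\partial^2_{\theta_j\theta_k}\log Z = Z^{-1}\partial^2_{\theta_j\theta_k}Z - Z^{-2}(\partial_{\theta_j}Z)(\partial_{\theta_k}Z)$ yield (2) and (3) once the $\exp(\psi(\theta))$ factors cancel against the same factor sitting in $\expvalb{\theta,N}{1;\xi}$. This is the finite-sum mirror of Theorem~\ref{thm:kass_exponential_family_expectation}, with the exact integral replaced by its quadrature analogue.

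For property (4), I would expand $\expvalb{\theta,N}{f;\xi} = \sum_i w_i\, f(\phi_\xi(\tilde{x}_i))\, u(\phi_\xi(\tilde{x}_i))\,\omega(\tilde{x}_i)^{-1}$ through the key identity, divide by $\expvalb{\theta,N}{1;\xi}$ written via property (1), and observe that the $\exp(\psi(\theta))$ from the denominator cancels the $\exp(-\psi(\theta))$ in the numerator, leaving $\exp(-\psi(\theta)^{(N)})$ inside the sum. I do not anticipate a genuine analytic obstacle, since there is no limit or convergence argument anywhere, only manipulation of a finite sum; the one point to watch is keeping the two scalars $\psi(\theta)$ and its quadrature surrogate $\psi(\theta)^{(N)}$ rigorously separate, as conflating them would silently break precisely the normalization that property (1) is designed to correct.
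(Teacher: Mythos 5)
Your proposal is correct and takes essentially the same route as the paper's proof: the paper likewise establishes (1) by substituting $u(\phi_\xi) = \exp(c(\phi_\xi)^\top\theta - \psi(\theta))\abs{\det \pdv{\phi_\xi}{\tilde{x}}}$ into the definition of $\expvalb{\theta,N}{1;\xi}$ and pulling out $\exp(-\psi(\theta))$, obtains (2) and (3) by direct differentiation of $\psi(\theta)^{(N)}$ with respect to $\theta_j$ and $\theta_k$, and deduces (4) from (1). Your write-up simply makes explicit the finite-sum bookkeeping (the $Z^{(N)}$ normalization and the log-derivative identities) that the paper's terse proof leaves to the reader.
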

\begin{proof}
    For the first equality, using the definition \eqref{eq:Numerical_expectation}, we get:
    \begin{align*}
        \expvalb{\theta,N}{1;\xi} &= Q^{d,\omega}_N\left[ u(\phi_\xi) \omega^{-1} \right],\\
        &= Q^{d,\omega}_N \left[ \exp(c(\phi_\xi)^\top\theta - \psi(\theta)) \abs{\det \pdv{\phi_\xi}{\tilde{x}}} \omega^{-1} \right]\\
        &= \frac{\exp(\psi(\theta)^{(N)})}{\exp(\psi(\theta))}.
    \end{align*}
    The second and third equalities follow directly by taking derivatives of $\psi(\theta)^{(N)}$ with respect to $\theta_j$, and with respect to $\theta_j$ and $\theta_k$. The fourth equality follows from the first.
\end{proof}

A natural way to choose the parameter $\xi$ is by minimizing the square of numerical integration error of \eqref{eq:integration_in_canonical_cube}, that is, $\bigl(\exp(\psi(\theta))-\exp \bigl(\psi(\theta)^{(N)}\bigr)\bigr)^2$. Let us define the integration error $E_N[f(x);\xi]$ as follows:
\begin{align}
    E_N[f;\xi] &\coloneqq \E_\theta[f] - \E_{\theta,N}[f; \xi]. \label{eq:En_f}
\end{align} 
Then, since $\exp(\psi(\theta))-\exp\bigl(\psi(\theta)^{(N)}\bigr) = \exp(\psi(\theta))(1-\expvalb{\theta,N}{1;\xi})$, minimizing the square of the numerical integration error of \eqref{eq:integration_in_canonical_cube} is equivalent to minimizing $E_N[1;\xi]^2$. The following proposition gives a necessary and sufficient condition such that the squared error $E_N[1;\xi]^2$ is a convex function. We denote the partial ordering of two squared matrices as $A\succeq B$ if $A-B$ is a positive semidefinite matrix. 
\begin{prop}\label{prop:convexity_of_abs_En}
    Let $u(x) = \frac{p_\theta(x)}{2^d q_\xi(x)}$, where $q_\xi = \exp\bigl(\tilde{c}(x)^\top\xi - \psi(\xi)\bigr)$, $c(x)$ and $\tilde{c}(x)\in \mathcal{C}^2(\mathbb{R})$ are the natural statistics of $p_\theta$ and $q_\xi$, respectively, $\psi(\xi)$ is the cumulant-generating function corresponding to $q_\xi$, and $\phi_\xi$ is a smooth bijection from $\mathcal{D}$ to $\mathbb{R}^d$ constructed from $q_\xi$. Also, let $\Xi \coloneqq \left\{ \xi \in \mathbb{R}^{n_\xi}\colon \psi(\xi) <\infty \right\}$. Then $E_N[1;\xi]^2$ is convex on an open convex set $\Xi_0 \subseteq \Xi$, if and only if for any $\xi \in \Xi_0$, the following condition is satisfied:
    \begin{equation}
        \expvalb{\theta,N}{\frac{1}{u}\dv{u}{\xi};\xi}\expvalb{\theta,N}{\frac{1}{u}\dv{u}{\xi};\xi}^\top
        \succeq E_N[1;\xi] \expvalb{\theta,N}{\frac{1}{u}\dv[2]{u}{\xi};\xi}.\label{eq:E_n_square_convex}
    \end{equation}
\end{prop}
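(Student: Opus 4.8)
The plan is to reduce the claim to the standard second-order characterization of convexity: a twice continuously differentiable function on an open convex set is convex if and only if its Hessian is positive semidefinite throughout that set. So the first step is to argue that, under the stated regularity ($q_\xi$, $c$, $\tilde{c}\in\mathcal{C}^2$ and $\phi_\xi$ a smooth bijection), the scalar map $\xi\mapsto F(\xi):=E_N[1;\xi]^2$ is of class $\mathcal{C}^2$ on $\Xi_0$. Since by \eqref{eq:Numerical_expectation} the quantity $E_N[1;\xi]$ is a \emph{finite} sum over the $N$ quadrature nodes of terms depending smoothly on $\xi$ (both explicitly through $q_\xi$ and implicitly through the node images $\phi_\xi(\tilde{x}_i)$), this differentiability is immediate and permits differentiation under the quadrature sum.

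Next I would write $e(\xi):=E_N[1;\xi]=\E_\theta[1]-\expvalb{\theta,N}{1;\xi}=1-I(\xi)$, where $I(\xi):=\expvalb{\theta,N}{1;\xi}$, using $\int_\mathcal{X} p_\theta=1$. The central computational step is to identify the gradient and Hessian of $I$ with the two expectation operators in \eqref{eq:E_n_square_convex}. Writing $I(\xi)=\sum_{i=1}^N w_i\,u(\phi_\xi(\tilde{x}_i))\,\omega(\tilde{x}_i)^{-1}$ and differentiating term by term, the key observation is that the weight factor $u(\phi_\xi)$ appearing in the definition of $\expvalb{\theta,N}{\cdot;\xi}$ cancels against the factor $1/u$, yielding
\[
\nabla_\xi I=\expvalb{\theta,N}{\frac{1}{u}\dv{u}{\xi};\xi},\qquad
\nabla^2_\xi I=\expvalb{\theta,N}{\frac{1}{u}\dv[2]{u}{\xi};\xi},
\]
where $\dv{u}{\xi}$ denotes the total derivative of $u(\phi_\xi(\tilde{x}_i))$ in $\xi$, capturing both sources of $\xi$-dependence.

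With these identities in hand, I would apply the chain rule to $F=e^2=(1-I)^2$, obtaining $\nabla_\xi F=-2(1-I)\,\nabla_\xi I$ and $\nabla^2_\xi F=2\,\nabla_\xi I\,(\nabla_\xi I)^\top-2(1-I)\,\nabla^2_\xi I$. Substituting the two expectation operators for $\nabla_\xi I$ and $\nabla^2_\xi I$, and $E_N[1;\xi]$ for $1-I$, the condition $\nabla^2_\xi F\succeq 0$ becomes precisely \eqref{eq:E_n_square_convex}. Because $\Xi_0$ is open and convex, positive semidefiniteness of $\nabla^2_\xi F$ at every $\xi\in\Xi_0$ is simultaneously necessary and sufficient for convexity of $F$ on $\Xi_0$, which settles both directions at once.

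The main obstacle I anticipate is the bookkeeping in the second step: correctly interpreting $\tfrac{1}{u}\dv{u}{\xi}$ and $\tfrac{1}{u}\dv[2]{u}{\xi}$ as total derivatives that account for both the explicit dependence of $u$ on $\xi$ through $q_\xi$ and the implicit dependence through the moving node images $\phi_\xi(\tilde{x}_i)$, and verifying that the weight factor cancels cleanly so that the derivatives of $I$ are genuinely expectations of these logarithmic derivatives. Everything else is a routine application of the Hessian criterion for convexity.
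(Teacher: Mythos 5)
Your proposal is correct and takes essentially the same route as the paper's own proof: you differentiate $E_N[1;\xi]^2$ twice in $\xi$, use the cancellation of the weight $u(\phi_\xi)$ inside $\expvalb{\theta,N}{\cdot\,;\xi}$ to identify the gradient and Hessian with the operators $\expvalb{\theta,N}{\frac{1}{u}\dv{u}{\xi};\xi}$ and $\expvalb{\theta,N}{\frac{1}{u}\dv[2]{u}{\xi};\xi}$ (with $\dv{u}{\xi}$ read as the total derivative, exactly as the paper intends), and then apply the second-order characterization of convexity on an open convex set, for which the paper cites \cite[Theorem 4.5]{rockafellar1970}. Your treatment is in fact slightly more explicit than the paper's, since you justify the $\mathcal{C}^2$ smoothness of the finite quadrature sum and handle necessity and sufficiency in one stroke rather than deferring the necessary direction to ``a similar argument.''
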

\begin{proof}
    Using the definitions of $E_N[1;\xi]$ and $\expvalb{\theta,N}{\cdot}$ from \eqref{eq:En_f} and \eqref{eq:Numerical_expectation} respectively, we can write
    \begin{align}
        \frac{1}{2}\pdv{E_N[1;\xi]^2}{\xi} = -E_N[1;\xi] \expvalb{\theta,N}{\frac{1}{u}\dv{u}{\xi};\xi} \label{eq:gradient_E_N_1_squared}
    \end{align}
    and
    \begin{align}
        \frac{1}{2}\pdv[2]{E_N[1;\xi]^2}{\xi} &= \expvalb{\theta,N}{\frac{1}{u}\dv{u}{\xi};\xi}\expvalb{\theta,N}{\frac{1}{u}\dv{u}{\xi};\xi}^\top \nonumber \\
        &\quad- E_N[1;\xi] \expvalb{\theta,N}{\frac{1}{u}\dv[2]{u}{\xi};\xi}.\label{eq:Hessian_E_N_1_squared}
    \end{align}
    Therefore, if for any $\xi \in \Xi_0$, condition \eqref{eq:E_n_square_convex} is satisfied, then $\pdv[2]{E_N[1;\xi]^2}{\xi}$ is positive semidefinite. By \cite[Theorem 4.5]{rockafellar1970} $E_N[1;\xi]^2$ is a convex function on $\Xi_0$. The necessary part can be obtained using a similar argument to the proof of \cite[Theorem 4.5]{rockafellar1970}. 
\end{proof}
We can now highlight two difficulties of using gradient-based method to find the minimizer of $E_N[1,\xi]^2$. First, by parts 1) and 4) of Lemma \ref{lem:numerical_expectation_identities}, the Jacobian and Hessian of $E_N[1;\xi]^2$ with respect to $\xi$, which are given respectively by \eqref{eq:gradient_E_N_1_squared} and \eqref{eq:Hessian_E_N_1_squared}, cannot be explicitly calculated unless the cumulant-generating function $\psi(\theta)$ is known in a closed form. Secondly, it is impossible to ensure the criterion that $E_N[1,\xi]^2$ is locally convex even on some bounded interval since its Hessian cannot be evaluated. As a consequence, using gradient methods or (quasi-)Newton methods for finding a locally optimal $\xi$ is not feasible.

With that being said, a more promising approach is to select $\xi$ such that $q_\xi$ covers the high-density area of $p_\theta$ as tightly as possible by optimizing another criterion as explained below, and the resulting bijection $\phi_\xi$ should then be amenable to direct computation. This turns out to be a viable approach to take, and the thus-constructed bijection could be shown to work in practice as well as be optimal with respect to the squared integration error, under some technical conditions. 
Our starting point is the following lemma \cite{herbrich2005}.
\begin{lem}\label{lem:Herbrich_KL_divergence}
    Let $q_\xi$ be a density from {an} exponential family with natural statistics given by $\tilde{c}(x)$. For any distribution $p$, the distribution $q_\xi^\ast$ that minimizes $\KL(p||q_\xi)$, satisfies
    \begin{align}
        \E_{q_\xi^\ast}\left[ \tilde{c} \right] = \E_{p}\left[ \tilde{c} \right]. \label{eq:moment_matching}
    \end{align}
\end{lem}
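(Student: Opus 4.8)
The plan is to exploit the linearity of $\log q_\xi$ in the natural parameter $\xi$ to reduce the minimization to an unconstrained convex optimization, and then invoke Theorem~\ref{thm:kass_exponential_family_expectation} to identify the stationarity condition with the moment-matching identity. First I would write out the objective explicitly. Since $q_\xi$ belongs to the exponential family, $q_\xi(x) = \exp\!\left(\tilde{c}(x)^\top\xi - \psi(\xi)\right)$, so that $\log q_\xi(x) = \tilde{c}(x)^\top\xi - \psi(\xi)$. Substituting into the definition of the divergence gives
\begin{align*}
    \KL(p\|q_\xi) = \E_p[\log p] - \E_p\!\left[\tilde{c}\right]^\top\xi + \psi(\xi),
\end{align*}
where the first term $\E_p[\log p]$ is the (negative) differential entropy of $p$ and does not depend on $\xi$. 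Thus minimizing $\KL(p\|q_\xi)$ over $\xi$ is equivalent to minimizing the function $F(\xi) := \psi(\xi) - \E_p[\tilde{c}]^\top\xi$.

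Next I would compute the stationarity condition. Differentiating $F$ with respect to $\xi$ and using the first identity of Theorem~\ref{thm:kass_exponential_family_expectation}, namely $\partial\psi(\xi)/\partial\xi_i = \E_\xi[\tilde{c}_i]$, yields
\begin{align*}
    \nabla_\xi F(\xi) = \E_{q_\xi}\!\left[\tilde{c}\right] - \E_p\!\left[\tilde{c}\right].
\end{align*}
Setting this gradient to zero at the optimal parameter $\xi^\ast$ produces exactly $\E_{q_{\xi^\ast}}[\tilde{c}] = \E_p[\tilde{c}]$, which is the claimed moment-matching condition once we write $q_\xi^\ast := q_{\xi^\ast}$. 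To conclude that this stationary point is the global minimizer, I would appeal again to Theorem~\ref{thm:kass_exponential_family_expectation}: since the representation is regular and minimal, $\psi$ is strictly convex (its Hessian is the positive-definite Fisher information matrix $g$), hence $F$ is strictly convex, so any stationary point is the unique global minimum.

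The main obstacle, which I would address with the regularity hypotheses already in force in the paper, is justifying the differentiation of $\psi$ and guaranteeing that a minimizer $\xi^\ast$ actually exists in the open parameter set where these derivatives are valid. Existence is not automatic: the infimum of $F$ could fail to be attained if $\E_p[\tilde{c}]$ lies on the boundary of the set of realizable expectation parameters (the closure of $\nabla\psi(\Xi)$). I would therefore state the result under the standing assumption that $p$ has finite and attainable expected statistics $\E_p[\tilde{c}]$, so that the image of the strictly convex gradient map $\xi\mapsto\nabla\psi(\xi)$ contains this value; differentiation under the integral sign is then permitted because the exponential family is regular and $\psi$ is smooth of arbitrary order by Theorem~\ref{thm:kass_exponential_family_expectation}. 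Under these conditions the stationarity equation is both necessary and sufficient, completing the argument.
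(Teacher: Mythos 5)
Your proof is correct. The paper itself gives no proof of this lemma---it is stated as a known result imported from the cited reference (Herbrich's note on minimising Kullback--Leibler divergence), and your argument is essentially the standard one found there: expand $\KL(p\|q_\xi) = \E_p[\log p] - \E_p[\tilde{c}]^\top\xi + \psi(\xi)$, differentiate using $\nabla\psi(\xi) = \E_{q_\xi}[\tilde{c}]$ from Theorem~\ref{thm:kass_exponential_family_expectation}, and conclude from stationarity plus the strict convexity of $\psi$ (positive-definite Fisher information in the minimal regular case) that the unique stationary point is the global minimizer. Your closing caveat about existence is a genuine refinement rather than a gap: the lemma as stated presupposes that a minimizer $q_\xi^\ast$ exists, which requires $\E_p[\tilde{c}]$ to be finite and to lie in the range of the gradient map $\xi \mapsto \nabla\psi(\xi)$ (the interior of the realizable mean-parameter set); if it sits on the boundary the infimum need not be attained. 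Since the paper applies the lemma with $q_\xi$ Gaussian and $p = p_\theta$ from a regular exponential family with moments of all orders, this attainability condition is satisfied in the intended use, so nothing in your argument conflicts with how the result is deployed.
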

Essentially, the exponential density $q_\xi$ that minimizes the $\KL(p||q_\xi)$ distance satisfies the moment-matching equality \eqref{eq:moment_matching}. 
If we choose $q_\xi$ to be a Gaussian density, then the mean $\mu$ and variance $\Sigma$ of $q_\xi$ should satisfy
\begin{subequations}
    \begin{align}
        \mu_i &= \E_{\theta}\left[x_i\right], & i=1,\ldots,d,\\
        \Sigma_{ij} &= \E_{\theta}\left[(x_i-\mu_i)(x_j-\mu_j)\right] , &i,j=1,\ldots,d.
    \end{align}\label{eq:mu_sigma}
\end{subequations}
In the actual implementation, we replace {the cumulant-generating function by its approximation $\psi(\theta)^{(N)}$ that is obtained from numerical integration using $N$ quadrature nodes}. Since the approximation errors of the parameters $\mu$ and $\Sigma$, as well as the cumulant-generating function $\psi(\theta)$, depend linearly on the integration errors on $\int \exp\bigl(c^\top x\bigr) dx$, then selecting an $N$ such that all the approximation errors for $\mu$, $\Sigma$, and $\psi(\theta)$ are simultaneously kept below their acceptable upper limits, is feasible (see \cite[Theorem 3.1]{emzir2023}).

In the following proposition, we show how to construct a smooth bijection $\zeta_\xi$ from $\mathbb{R}^d$ to $\mathcal{D}$ using a parametric density $q_\xi$. We also show that its inverse $\phi_\xi =\zeta_\xi^{-1}$ can be expressed in a closed form when using Gaussian density $q_\xi$ with parameters in \eqref{eq:mu_sigma}.
\begin{prop}\label{prop:zeta_xi_gaussian}
    Let $s\colon\mathbb{R}^d \times \mathcal{P} \to \mathcal{D}$ defined by 
    \begin{subequations}
    \begin{align}
        s(z,q_\xi) &= \mqty[s_1(z_1,q_\xi)\\ \vdots \\ s_d(z_d,q_\xi)], \label{eq:s_general}\\
        s_i(z_i,q_\xi) &= 2 \dfrac{\int_{-\infty}^{z_i} q_\xi(z_1,\ldots,y_i,\ldots,z_{d}) dy_i}{\int_{-\infty}^{\infty} q_\xi(z_1,\ldots,y_i,\ldots,z_{d}) dy_i} - 1.
    \end{align}       
    \end{subequations}
    If $q_\xi$ is a smooth density with support equal to $\mathbb{R}^d$, then $s(\cdot, q_\xi)$ is a smooth bijection from $\mathbb{R}^d$ onto $\mathcal{D}$.
    
    Assume that $q_\xi$ is a Gaussian density with mean $\mu$ and covariance matrix $\Sigma \succ 0$, and let the eigendecomposition of $\Sigma$ be given by $\Sigma = T^{-1} \Lambda T$, where $T$ is unitary. Let $\tilde{q}_\xi$ be another Gaussian density with mean at $T\mu$ and variance $\Lambda$. Define $\zeta_\xi: \mathbb{R}^d \to \mathcal{D}$ as
        $\zeta_\xi(x) = s(Tx, \tilde{q}_\xi)$. %
    Then $\abs{\det\pdv{\zeta_\xi}{x}} = 2^d q_\xi$. Moreover, the inverse of $\zeta_\xi$, denoted as $\phi_\xi:\mathcal{D}\to\mathbb{R}^d$, is given by
    \begin{align}
        \phi_\xi(\tilde{x}) = \mu + \sqrt{2} T^{-1} \Lambda^{1/2} \erf^{-1}(\tilde{x}), \quad \tilde{x}\in \mathcal{D} \label{eq:phi_xi_gaussian}
    \end{align}
    where $\erf^{-1}(\tilde{x}) = \mqty[\erf^{-1}(\tilde{x}_1),\ldots,\erf^{-1}(\tilde{x}_d)]^\top$.
\end{prop}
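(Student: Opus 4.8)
The plan is to handle the three assertions of the proposition in turn: smoothness of the general map $s$, its bijectivity, and then the Gaussian-specific Jacobian determinant and closed-form inverse. First I would verify smoothness. Since $q_\xi$ is a smooth, strictly positive density on $\mathbb{R}^d$, for each $i$ the normalizing integral $D_i(z):=\int_{-\infty}^{\infty} q_\xi(z_1,\dots,y_i,\dots,z_d)\,dy_i$ is smooth in $z$ and strictly positive, while the partial integral $N_i(z):=\int_{-\infty}^{z_i} q_\xi(z_1,\dots,y_i,\dots,z_d)\,dy_i$ is smooth (differentiation under the integral sign is justified because $q_\xi$ and its partial derivatives are integrable along each axis). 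Hence each $s_i = 2 N_i/D_i - 1$, and therefore $s$, is smooth.

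Next I would isolate the one-dimensional building block that drives everything. For fixed values of the remaining coordinates, $z_i \mapsto s_i$ is strictly increasing because $\partial s_i/\partial z_i = 2\,q_\xi(z)/D_i > 0$, and it maps $\mathbb{R}$ onto $(-1,1)$ since $N_i/D_i$ is exactly the conditional CDF of the $i$th coordinate, running from $0$ to $1$. This gives coordinatewise monotonicity and the correct range. The delicate point, which I expect to be the main obstacle, is upgrading this to \emph{global} bijectivity of the vector map $s$ when $q_\xi$ does not factorize: because $s_i$ depends on every coordinate, the Jacobian $\partial s/\partial z$ is not triangular and not obviously nonsingular, so neither a direct inversion nor a naive Hadamard global-inverse argument applies. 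The clean way around this, and precisely what the construction $\zeta_\xi(x) = s(Tx,\tilde q_\xi)$ is designed to exploit, is to reduce to a product density, where the obstruction disappears; I would therefore argue bijectivity through the diagonalizing change of variables below.

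For the Gaussian case I would use the eigendecomposition $\Sigma = T^{-1}\Lambda T = T^\top \Lambda T$ with $T$ orthogonal. The density $\tilde q_\xi$, being Gaussian with \emph{diagonal} covariance $\Lambda$, factorizes into independent one-dimensional Gaussians; consequently each conditional CDF in $s_i(\cdot,\tilde q_\xi)$ collapses to the $i$th marginal CDF, and $s(\cdot,\tilde q_\xi)$ becomes a Cartesian product of $d$ strictly increasing maps $z_i \mapsto \erf\!\big((z_i - (T\mu)_i)/\sqrt{2\Lambda_{ii}}\big)$. A Cartesian product of one-dimensional bijections is manifestly a smooth bijection onto $\mathcal{D}$, which settles bijectivity in the case actually used; composing with the linear unitary map $x \mapsto Tx$ preserves this, so $\zeta_\xi$ is a smooth bijection.

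Finally I would compute the Jacobian and invert. By the chain rule $\partial\zeta_\xi/\partial x = (\partial s/\partial z)|_{z=Tx}\,T$; the product structure makes $\partial s/\partial z$ diagonal with entries $2\,\tilde q_{\xi,i}$, so $\det(\partial s/\partial z) = 2^d\,\tilde q_\xi(Tx)$, and since $\abs{\det T} = 1$ we obtain $\abs{\det \partial\zeta_\xi/\partial x} = 2^d\,\tilde q_\xi(Tx)$. The orthogonal change-of-variables identity $(x-\mu)^\top T^\top \Lambda^{-1} T (x-\mu) = (x-\mu)^\top \Sigma^{-1}(x-\mu)$ together with $\det\Lambda = \det\Sigma$ gives $\tilde q_\xi(Tx) = q_\xi(x)$, yielding the claimed $\abs{\det\partial\zeta_\xi/\partial x} = 2^d q_\xi$. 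For the inverse, solving $\tilde x_i = \erf\!\big((T(x-\mu))_i/\sqrt{2\Lambda_{ii}}\big)$ coordinatewise gives $(T(x-\mu))_i = \sqrt{2\Lambda_{ii}}\,\erf^{-1}(\tilde x_i)$, i.e. $T(x-\mu) = \sqrt{2}\,\Lambda^{1/2}\erf^{-1}(\tilde x)$; left-multiplying by $T^{-1}$ produces $\phi_\xi(\tilde x) = \mu + \sqrt{2}\,T^{-1}\Lambda^{1/2}\erf^{-1}(\tilde x)$, as asserted.
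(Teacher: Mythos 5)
Your handling of the Gaussian-specific claims is correct and essentially the paper's own argument: the paper likewise reduces $s_i(\cdot,\tilde{q}_\xi)$ to the marginal form $\erf\left(\Lambda_{ii}^{-1/2}(z_i-\mu_{z,i})/\sqrt{2}\right)$ with $z=Tx$, uses unitarity of $T$ to identify $\abs{\det\pdv{\zeta_\xi}{x}}$ with $\prod_{i=1}^d 2\,\tilde{q}_{\xi,i} = 2^d q_\xi$, and reads the inverse off the definition of $s_i$; you merely make explicit the chain rule and the identity $\tilde{q}_\xi(Tx)=q_\xi(x)$, which the paper leaves as ``it can be verified.''

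The substantive divergence is the first claim, and there your caution is not only justified but understated. The paper disposes of general bijectivity in three lines, arguing injectivity as: if $s(z,q_\xi)=s(y,q_\xi)$ then ``since $s_i(z_i,q_\xi)$ is a smooth bijection, $z_i$ must equal $y_i$.'' That step tacitly treats $s_i$ as a function of $z_i$ alone, which is exactly the non-product obstruction you isolated: $s_i$ is the conditional CDF of $z_i$ given all remaining coordinates, and with $z_{-i}\neq y_{-i}$ nothing forces $z_i=y_i$. In fact the general claim is false, not merely hard. Take $d=2$ and $q$ the equal mixture of $N((-a,-a),I)$ and $N((a,a),I)$, a smooth everywhere-positive density; then $s_1 = 2\left[w(z_2)\Phi(z_1+a)+(1-w(z_2))\Phi(z_1-a)\right]-1$ with $w(t)=(1+e^{2at})^{-1}$, and symmetrically for $s_2$, where $\varphi$ and $\Phi$ denote the standard normal pdf and cdf. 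At the origin the Jacobian of $s$ is $2\mqty[\varphi(a) & -(a/2)\delta_a \\ -(a/2)\delta_a & \varphi(a)]$ with $\delta_a = 2\Phi(a)-1$, whose determinant $4\left[\varphi(a)^2-(a/2)^2\delta_a^2\right]$ is negative already for $a=2$; far out along the diagonal $z_1=z_2=R$ the off-diagonal entries decay like $a\,e^{-2aR}\varphi(R-a)/(R-a)$ while the diagonal entries are of order $\varphi(R-a)$, so the determinant is positive there. A smooth injective map of $\mathbb{R}^2$ into $\mathbb{R}^2$ cannot have a Jacobian determinant of both signs at regular points (invariance of domain plus local degree), so $s(\cdot,q)$ is not injective. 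Consequently your route---establishing bijectivity only for the diagonalized product density $\tilde{q}_\xi$, where $s$ is a Cartesian product of scalar bijections---is the only sound one, and it covers everything the paper actually uses, since $\zeta_\xi$ and $\phi_\xi$ are defined through $\tilde{q}_\xi$. The one caveat to record: as literally stated, the proposition's first sentence asserts the general-$q_\xi$ result, so neither your proof nor the paper's establishes it; the correct repair is to weaken that sentence (e.g., to product densities, or to densities for which the conditional-CDF map has a nonvanishing Jacobian determinant), not to search for the missing argument.
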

\begin{proof}
    Each $s_i(z_i,q_\xi)$ is a smooth bijection from $\mathbb{R}$ to $(-1,1)$. By the definition of $s$, for each $\tilde{x}\in \mathcal{D}$ there exists $z \in \mathbb{R}^d$ such that $s(z,q_\xi)=\tilde{x}$. Let $z,y \in \mathbb{R}^d$, and suppose $s(z,q_\xi) = s(y,q_\xi)$. Since $s_i(z_i,q_\xi)$ is a smooth bijection, $z_i$ must equal to $y_i$, which leads to $z=y$. The smoothness of $\zeta_{\xi}$ follows from that of $s_i(z_i,q_\xi)$ for each $i$.\looseness=-1

    For the second part, it can be verified that with $z=Tx$ and $\mu_z = T\mu$ we have
        $
        s_i(z_i,\tilde{q}_\xi)  = \erf (\frac{\Lambda_{ii}^{-1/2}(z_i-\mu_{z,i})}{\sqrt{2}}). %
        $
    Then since $T$ is unitary, we get $\abs{\det\pdv{\zeta_\xi}{x}} = \abs{\det \pdv{s(z,\tilde{q}_\xi)}{z}} = \prod_{i=1}^d 2 \frac{1}{\sqrt{2 \pi \Lambda_{ii}}} \exp(-\frac{1}{2}(z_i-\mu_{z,i})^2) = 2^d q_\xi$.
    The inverse of $\zeta_\xi$ can be obtained directly from the definition of $s_i$%
\end{proof}

Proposition \ref{prop:zeta_xi_gaussian} tells us that for any non-degenerate Gaussian density $q_\xi$, there exists a bijection $\zeta_\xi$ such that the transformation of an infinitesimal volume $dx$ on $\mathbb{R}^d$ under $\zeta_\xi$ is equivalent to $2^d q_\xi$ times $d\tilde{x}$. The bijection $\phi_\xi$ given by \eqref{eq:phi_xi_gaussian} can be interpreted as the following consecutive operations. First it transforms the quadrature nodes from the canonical hypercube to $\mathbb{R}^d$ by the inverse error function, then it scales each axis by the appropriate square roots of the eigenvalues of $\Sigma$. Afterwards, it rotates the quadrature nodes according to the unitary matrix $T$. Lastly it shifts the quadrature nodes by $\mu$ from the origin; see the middle columns of Figures \ref{fig:particle_vs_projection_densities} for an illustration of these operations. In the numerical implementation, we will use this bijection to project the Gauss--Chebyshev nodes (for univariate case), or the sparse Gauss--Patterson nodes (for multivariate case), from $\mathcal{D}$ to $\mathbb{R}^d$ (see \cite{gerstner1998} for details). We will refer to these numerical integrations as the Gauss--Chebyshev quadrature (GCQ) and the Gauss--Patterson quadrature (GPQ), respectively.

Let us turn our attention to checking if the parameters $\xi$ that are selected via the moment-matching rule \eqref{eq:moment_matching} also optimize $E_N[1;\xi]^2$ locally. Proposition \ref{prop:optimality} below shows that if we use an approximated version of the moment-matching criterion \eqref{eq:moment_matching} for a general exponential family $\text{EM}(\tilde{c})$ (it does not have to be a Gaussian family), then the selected parameters $\xi$ also optimize $E_N[1;\xi]^2$ under certain constraints in the numerical expectation. Explicitly, in a special case where both natural statistics $c$ and $\tilde{c}$ are equivalent, this constraint requires that the numerical expectation $\expvalb{N,\theta}{c;\xi}/\expvalb{N,\theta}{1;\xi}$ is equal to the true value of $\expvalb{\theta}{c}$.

\begin{prop}\label{prop:optimality}
    Let $q_\xi$ in \eqref{eq:s_general} be a density from $\text{EM}(\tilde{c})$ where the natural statistics $\tilde{c}_i \in \mathcal{C}^2(\mathbb{R}^d)$ are linearly independent. The parameter $\xi$ optimizes $E_N[1;\xi]^2$ if
    $\pdv{\psi(\xi)}{\xi} -\frac{\expvalb{\theta,N}{\tilde{c};\xi}}{\expvalb{\theta,N}{1;\xi}} + \frac{\expvalb{\theta,N}{\dv{x}{\xi} \dv{m}{x};\xi} }{\expvalb{\theta,N}{1;\xi}} = 0,$ where 
    $m(\xi) \coloneqq c^\top(x(\xi))\theta-\tilde{c}^\top(x(\xi)) \xi -(\psi(\theta)-\psi(\xi))$.
    In particular, if the following approximated moment-matching rule is used to choose the parameter $\xi$:
    \begin{align}
        \dfrac{\expvalb{\theta,N}{\tilde{c};\xi}}{\expvalb{\theta,N}{1;\xi}} =& \pdv{\psi(\xi)}{\xi}, \label{eq:approximated_moment_matching}
    \end{align}
    then, the selected parameter $\xi$ is a local optimum of $E_N[1;\xi]^2$ if 
    $    \expvalb{\theta,N}{\dv{x}{\xi} \dv{m}{x} ;\xi} = 0. $ %
    If $\tilde{c}=c$ and $\xi = \theta$, then $\xi$ is a local optimum of  $E_N[1;\xi]^2$.
\end{prop}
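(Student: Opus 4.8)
The plan is to treat ``optimizes'' as first-order stationarity of the nonnegative objective $E_N[1;\xi]^2$ and to read the stationarity condition off the gradient already computed in Proposition~\ref{prop:convexity_of_abs_En}. By \eqref{eq:gradient_E_N_1_squared} we have $\frac{1}{2}\pdv{E_N[1;\xi]^2}{\xi} = -E_N[1;\xi]\,\expvalb{\theta,N}{\frac{1}{u}\dv{u}{\xi};\xi}$, so at any point where the integration error $E_N[1;\xi]$ does not vanish the gradient is zero if and only if $\expvalb{\theta,N}{\frac{1}{u}\dv{u}{\xi};\xi}=0$. Hence the whole proposition reduces to evaluating the single vector $\expvalb{\theta,N}{\frac{1}{u}\dv{u}{\xi};\xi}$ and matching it against the three displayed conditions.

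The central computation is the logarithmic $\xi$-derivative of $u$, and the one subtlety I must respect is that $u(x)=p_\theta(x)/(2^d q_\xi(x))$ depends on $\xi$ in two ways: explicitly through $q_\xi=\exp(\tilde{c}(x)^\top\xi-\psi(\xi))$, and implicitly because inside $\expvalb{\theta,N}{\cdot;\xi}$ every node is evaluated at $x=\phi_\xi(\tilde{x})$, which itself moves with $\xi$. Writing $\log u = \log p_\theta(x) - d\log 2 - \tilde{c}(x)^\top\xi + \psi(\xi)$ and taking the total derivative with $x=\phi_\xi(\tilde{x})$ gives $\frac{1}{u}\dv{u}{\xi} = \pdv{\psi(\xi)}{\xi} - \tilde{c}(x) + \dv{x}{\xi}\,\dv{m}{x}$, where $\dv{m}{x} := \pdv{\log u}{x}$ collects the partial $x$-derivative of the integrand. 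Applying the linear operator $\expvalb{\theta,N}{\cdot;\xi}$, factoring the $\tilde{x}$-independent term $\pdv{\psi(\xi)}{\xi}$ out of the expectation, and dividing through by $\expvalb{\theta,N}{1;\xi}$ yields exactly the stationarity equation stated in the proposition. This establishes the first assertion.

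The two specializations then follow by inspection of that equation. Substituting the approximated moment-matching rule \eqref{eq:approximated_moment_matching} cancels the first two terms, $\pdv{\psi(\xi)}{\xi}-\expvalb{\theta,N}{\tilde{c};\xi}/\expvalb{\theta,N}{1;\xi}=0$, so only the geometric cross term survives and the condition collapses to $\expvalb{\theta,N}{\dv{x}{\xi}\,\dv{m}{x};\xi}=0$, which is the second assertion. For the case $\tilde{c}=c$, I would set $\xi=\theta$: then $q_\xi=p_\theta$ and $u\equiv 2^{-d}$ is constant, so $\dv{m}{x}=\pdv{\log u}{x}=0$ and the cross term vanishes identically; at the same time the integrand of \eqref{eq:integration_in_canonical_cube} degenerates to a constant (the situation noted just before Proposition~\ref{prop:convexity_of_abs_En}), which makes the moment-matching relation hold through Lemma~\ref{lem:numerical_expectation_identities}(2) and drives $E_N[1;\theta]$ to its minimal value. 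Thus $\xi=\theta$ is a stationary point, i.e.\ a (local) optimum.

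I expect the main obstacle to be the bookkeeping of the implicit $\xi$-dependence through the moving nodes $x=\phi_\xi(\tilde{x})$: the cross term $\dv{x}{\xi}\,\dv{m}{x}$ is precisely this contribution and is easy to drop if one naively differentiates only the explicit factor $q_\xi$. A secondary delicacy is interpreting ``optimum'' -- the argument really produces a vanishing gradient (stationarity), and upgrading this to a genuine local minimum would require the positive-semidefiniteness criterion \eqref{eq:E_n_square_convex} of Proposition~\ref{prop:convexity_of_abs_En}; in the special case $\xi=\theta$ this is automatic because the nonnegative $E_N[1;\xi]^2$ attains its floor there.
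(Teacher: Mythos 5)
Your proposal is correct and follows essentially the same route as the paper's proof: it starts from the gradient formula \eqref{eq:gradient_E_N_1_squared} of Proposition~\ref{prop:convexity_of_abs_En} and computes the total $\xi$-derivative of $\log u$ through the moving nodes $x=\phi_\xi(\tilde{x})$ --- exactly the role of the paper's auxiliary function $m(\xi)= c^\top(x(\xi))\theta-\tilde{c}^\top(x(\xi))\xi-(\psi(\theta)-\psi(\xi))$ --- before specializing via the moment-matching rule and the vanishing cross term. Your handling of the case $\tilde{c}=c$, $\xi=\theta$ (via $u\equiv 2^{-d}$, hence $\dv{m}{x}=0$) is just a more explicit version of the paper's ``follows directly by substitution,'' and your closing caveat that the argument delivers stationarity rather than a verified second-order condition matches the paper's own level of rigor.
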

\begin{proof}
    Let us denote $x(\xi)\coloneqq \phi_\xi(\tilde{x})$. Using \eqref{eq:gradient_E_N_1_squared} 
    from the proof of Proposition \ref{prop:convexity_of_abs_En},  
    we can write
    \begin{align*}
       \frac{1}{2 \expvalb{\theta,N}{1;\xi}} \dv{E_N[1;\xi]^2}{\xi} =& -[\pdv{\psi(\xi)}{\xi} -\frac{\expvalb{\theta,N}{\tilde{c};\xi}}{\expvalb{\theta,N}{1;\xi}} \\
       &+ \frac{\expvalb{\theta,N}{\dv{x}{\xi} \dv{m}{x};\xi} }{\expvalb{\theta,N}{1;\xi}}]E_N[1;\xi].
    \end{align*}
    Hence, if the approximated moment-matching rule \eqref{eq:approximated_moment_matching} is satisfied, and $\expvalb{\theta,N}{\dv{m}{x}\dv{x}{\xi};\xi} = 0$, then $\frac{1}{2} \dv{E_N[1;\xi]^2}{\xi} = 0$, which ensures the local optimality. The case when $\tilde{c}=c$ follows directly by substitution.
    
\end{proof}
Note that the approximated moment-matching rule \eqref{eq:approximated_moment_matching} can be implemented as an iterative procedure, where using initial parameters $\xi_i$, one computes the updated parameters $\xi_{i+1}$ via $\frac{\expvalb{\theta,N}{\tilde{c};\xi_i}}{\expvalb{\theta,N}{1;\xi_i}} = \pdv{\psi(\xi_{i+1})}{\xi_{i+1}}$. The update is repeated until the current iterate is close enough to the fixed point. To analyze the convergence of this iterative procedure, let us denote $\hat{\xi} \coloneqq \pdv{\psi(\xi)}{\xi}$. The mapping from $\xi$ to $\hat{\xi}$ is a diffeomorphism via the Legendre transformation \cite[Theorem 2.2.3]{kass1997}. %
Therefore, we can write $F_N(\hat{\xi}) \coloneqq \frac{\expvalb{\theta,N}{\tilde{c};\xi}}{\expvalb{\theta,N}{1;\xi}}$. The approximated moment-matching rule can then be written as a Picard iteration $\hat{\xi}_{i+1} = F_N(\hat{\xi}_i)$. Using Banach’s fixed-point theorem, in the proposition below, we show that there exists a fixed point of the mapping $F_N$ on some subset of $\hat{\Xi}\coloneqq\{ \hat{\xi}\colon\xi \in \Xi \}$.
\begin{prop}\label{prop:Fixed_point}
    Using the notations of Proposition \ref{prop:optimality}, suppose that for each $\theta \in \Theta$, $\tilde{c}_i(\phi_\xi)u(\phi_\xi)\omega^{-1}$ belongs to $\mathcal{W}^r_d(\mathcal{D})$ for any $i\in \left\{1,\ldots,n_\xi  \right\}$ and for some $r>0\in \mathbb{N}$, uniformly for any $\xi \in \Xi_\theta$, where,
        $\mathcal{W}^r_d(\mathcal{D}) \coloneqq \{ f\colon \mathcal{D} \to \mathbb{R}\colon \|\frac{\partial^{\abs{s}} f}{\partial\tilde{x}_{(1)}^{s_1}\ldots \partial\tilde{x}_{(d)}^{s_d}}\|_\infty < \infty, s_i < r\}$,
    with $\abs{s}=\sum_{i=1}^d s_i$ and $\Xi_\theta  \subset \Xi$ open. Moreover, assume that $\tilde{c}_i(\phi_\xi)u(\phi_\xi)$ is continuously differentiable in $\xi$ on $\Xi_\theta $. Then there exists an $N_0 \in \mathbb{N}$ and a subset $\hat{\Xi}_c \subset \hat{\Xi}$ such that $F_N$ is a contraction in $\hat{\Xi}_c$ for $N \ge N_0$.
\end{prop}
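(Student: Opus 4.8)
The plan is to exhibit $F_N$ as a vanishing perturbation of a \emph{constant} map and then to force its Lipschitz constant below $1$ for large $N$ by controlling the quadrature error committed on its Jacobian. The key structural observation is that the exact counterpart of $F_N$ is constant in $\hat{\xi}$. Indeed, using $\abs{\det \pdv{\zeta_\xi}{x}} = 2^d q_\xi$ from Proposition \ref{prop:zeta_xi_gaussian} and the change of variables $x = \phi_\xi(\tilde{x})$, one has $\int_\mathcal{D} f(\phi_\xi)\, u(\phi_\xi)\, d\tilde{x} = \int_{\mathbb{R}^d} f(x)\, p_\theta(x)\, dx = \E_\theta[f]$, which does not depend on $\xi$. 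Since $Q^d_N$ approximates exactly this integral (the weight $\omega^{-1}$ cancels the quadrature multiplier $\omega$), the map $F_N(\hat{\xi}) = \expvalb{\theta,N}{\tilde{c};\xi}/\expvalb{\theta,N}{1;\xi}$ converges pointwise to the constant $\E_\theta[\tilde{c}]$, whose Lipschitz constant is $0$. Thus it suffices to show that the Jacobian $\pdv{F_N}{\hat{\xi}}$ is uniformly small on a suitable $\hat{\Xi}_c$ for $N$ large.

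First I would compute $\pdv{F_N}{\hat{\xi}} = \pdv{F_N}{\xi}\, g(\xi)^{-1}$, where $\pdv{\xi}{\hat{\xi}} = g(\xi)^{-1}$ is the inverse Fisher metric of $\mathrm{EM}(\tilde{c})$ arising from the Legendre diffeomorphism $\xi \mapsto \hat{\xi}$ of Theorem \ref{thm:kass_exponential_family_expectation}, which is bounded on a compact set. The essential factor is $\pdv{F_N}{\xi}$, obtained by the quotient rule from $\pdv{}{\xi}\expvalb{\theta,N}{\tilde{c};\xi}$, $\pdv{}{\xi}\expvalb{\theta,N}{1;\xi}$, and the numerical expectations themselves. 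Because $\expvalb{\theta,N}{f;\xi} = Q^d_N[f(\phi_\xi) u(\phi_\xi) \omega^{-1}]$ is a finite weighted sum, differentiation passes through it, giving $\pdv{}{\xi}\expvalb{\theta,N}{f;\xi} = Q^d_N[\pdv{}{\xi}(f(\phi_\xi) u(\phi_\xi))\, \omega^{-1}]$. On the exact side, differentiating the $\xi$-independent quantity $\int_\mathcal{D} f(\phi_\xi) u(\phi_\xi)\, d\tilde{x} \equiv \E_\theta[f]$ yields $\int_\mathcal{D} \pdv{}{\xi}(f(\phi_\xi) u(\phi_\xi))\, d\tilde{x} = 0$. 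Hence $\pdv{}{\xi}\expvalb{\theta,N}{f;\xi}$ equals \emph{exactly} the quadrature error of $Q^d_N$ on the integrand $\pdv{}{\xi}(f(\phi_\xi) u(\phi_\xi))\omega^{-1}$, whose true integral vanishes.

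It then remains to make this error uniformly small. The regularity hypothesis places $\tilde{c}_i(\phi_\xi) u(\phi_\xi) \omega^{-1}$ in $\mathcal{W}^r_d(\mathcal{D})$ uniformly in $\xi \in \Xi_\theta$, and the continuous differentiability in $\xi$ ensures that the differentiated integrand $\pdv{}{\xi}(\tilde{c}_i(\phi_\xi) u(\phi_\xi))\omega^{-1}$ likewise has bounded mixed derivatives, uniformly in $\xi$. Invoking the standard Smolyak sparse-grid (Gauss-type) error estimate for $\mathcal{W}^r_d$ integrands, which decays in $N$, the quadrature error, and therefore $\pdv{}{\xi}\expvalb{\theta,N}{\cdot;\xi}$, tends to $0$ uniformly on $\hat{\Xi}_c$. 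Moreover, part 1) of Lemma \ref{lem:numerical_expectation_identities} gives $\expvalb{\theta,N}{1;\xi} = \exp(\psi(\theta)^{(N)} - \psi(\theta)) \to 1$, so the denominators stay bounded away from zero for large $N$. Combining these estimates yields $\sup_{\hat{\xi}\in\hat{\Xi}_c}\|\pdv{F_N}{\hat{\xi}}\| \to 0$. I would then choose $N_0$ and a small convex closed ball $\hat{\Xi}_c$ around $\E_\theta[\tilde{c}]$ so that this supremum is at most some $L<1$ for all $N \ge N_0$; the mean value inequality on the convex $\hat{\Xi}_c$ gives the contraction property.

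I expect the main obstacle to be the third step: establishing the \emph{uniform-in-$\xi$} bound on the mixed partial derivatives of the differentiated integrand, so that a single sparse-grid estimate applies simultaneously across all $\hat{\xi} \in \hat{\Xi}_c$, and simultaneously selecting $\hat{\Xi}_c$ small enough that $g(\xi)^{-1}$ is bounded, $F_N$ is a self-map there, and the uniform regularity hypothesis of $\mathcal{W}^r_d$ genuinely holds on the whole ball.
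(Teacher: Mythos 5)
Your proposal is correct and shares the paper's overall skeleton---factor the Jacobian through the Legendre diffeomorphism as $\pdv{F_N}{\xi}$ composed with the inverse Fisher matrix $\left(\pdv[2]{\psi(\xi)}{\xi}\right)^{-1}$, invoke the sparse-grid error estimate for $\mathcal{W}^r_d(\mathcal{D})$ integrands, and conclude via the mean value inequality on a convex set---but your handling of the decisive step is genuinely different and in fact tighter than the paper's. The paper passes from the function-value estimate $\norm{F_N(\hat{\xi}(\xi)) - \E_\theta[\tilde{c}]} = \mathcal{O}(N^{-r}\log(N)^{(d-1)(r-1)})$ directly to the assertion that $\pdv{F_N}{\xi}$ decreases to zero as $N\to\infty$; since uniform convergence of $F_N$ to the constant $\E_\theta[\tilde{c}]$ does not by itself imply convergence of derivatives, this is the soft spot of the published argument. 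You replace it with a concrete mechanism: $\expvalb{\theta,N}{f;\xi}$ is a finite sum, so $\partial_\xi$ passes through $Q^d_N$, and because the exact transformed integral $\int_\mathcal{D} f(\phi_\xi)\,u(\phi_\xi)\,d\tilde{x} = \E_\theta[f]$ is $\xi$-independent, the true integral of the differentiated integrand vanishes; hence $\partial_\xi \expvalb{\theta,N}{f;\xi}$ \emph{is} the quadrature error on $\partial_\xi\bigl(f(\phi_\xi)u(\phi_\xi)\bigr)\omega^{-1}$, to which the Smolyak estimate applies directly. The price, which you correctly flag as the main obstacle, is that the stated hypotheses only place $\tilde{c}_i(\phi_\xi)u(\phi_\xi)\omega^{-1}$ itself in $\mathcal{W}^r_d(\mathcal{D})$ uniformly in $\xi$; applying the estimate to the $\xi$-differentiated integrand needs the analogous uniform bound on its mixed $\tilde{x}$-derivatives---a mild strengthening that the paper's own proof implicitly requires anyway, so your route makes explicit (and suggests how to close) a gap the paper glosses over. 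Two further details work in your favor: choosing $\hat{\Xi}_c$ as a convex ball directly in the $\hat{\xi}$-coordinates makes the mean-value step immediate, whereas the paper's $\hat{\Xi}_c = \{\hat{\xi} : \xi \in B(\xi_a,r_1)\}$ is the Legendre image of a ball and need not be convex; and your observation via part 1) of Lemma \ref{lem:numerical_expectation_identities} that $\expvalb{\theta,N}{1;\xi} = \exp(\psi(\theta)^{(N)}-\psi(\theta)) \to 1$ keeps the quotient-rule denominator bounded away from zero, a point the paper leaves implicit. Your attention to $F_N$ being a self-map of $\hat{\Xi}_c$ is also the right instinct for the Banach fixed-point application advertised in the surrounding text, even though the proposition itself only claims the contraction property.
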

\begin{proof}
    $F_N$ is continuously differentiable on $\Xi_\theta$ by the assumption $\tilde{c}_i(\phi_\xi)u(\phi_\xi)$ is continuously differentiable in $\xi$ on $\Xi_\theta $ and the definition of $F_N$. Using the mean value theorem \cite[\textsection 0.27]{aubin2001} we can write $\|F_N(\hat{\xi}_1)-F_N(\hat{\xi}_2)\| \leq \sup_{\hat{\xi}\in\hat{\Xi}_c}\|\pdv{F_N}{\hat{\xi}}\| \|\hat{\xi}_1-\hat{\xi}_2\|$. Therefore, if $\sup_{\hat{\xi}\in\hat{\Xi}_c}\|\pdv{F_N}{\hat{\xi}}\| < 1$ in an open convex subset $\hat{\Xi}_c$ then $F_N$ is a contraction in the set. Notice that using $\pdv{\xi}{\hat{\xi}} = (\pdv[2]{\psi(\xi)}{\xi})^{-1}$ \cite[p. 17]{amari2016}, we obtain
    $\pdv{F_N}{\hat{\xi}} = ( \pdv[2]{\psi(\xi)}{\xi})^{-1} \pdv{F_N}{\xi}$. Since $\{ \tilde{c}_i \}$ are linearly independent, then $( \pdv[2]{\psi(\xi)}{\xi})$ is invertible for any $\xi \in \Xi$ \cite{calin2014}. Therefore, finding $\hat{\Xi}_c$ such that $\sup_{\hat{\xi}\in\hat{\Xi}_c}\|\pdv{F_N}{\hat{\xi}}\| < 1$ is equivalent to finding an open convex subset $\Xi_c \subset \Xi_\theta$ such that 
    \begin{align}
        \sup_{\xi \in \Xi_c}\norm{\left( \pdv[2]{\psi(\xi)}{\xi} \right)^{-1} \pdv{F_N}{\xi}}<1. \label{eq:Xi_c}    
    \end{align}
    Since
        $\sup_{\xi \in \Xi_c}\|{( \pdv[2]{\psi(\xi)}{\xi})^{-1} \pdv{F_N}{\xi}}\|$ is less than $\sup_{\xi \in \Xi_c}\|( \pdv[2]{\psi(\xi)}{\xi})^{-1}\|\sup_{\xi \in \Xi_c}\|\pdv{F_N}{\xi}\|,$
    and, for an invertible matrix $T$ we have $\frac{1}{\|T\|}<\|T^{-1}\|$, condition in  \eqref{eq:Xi_c} is satisfied if
        $\sup_{\xi \in \Xi_c}\|\pdv{F_N}{\xi}\| < \inf_{\xi \in \Xi_c}\|( \pdv[2]{\psi(\xi)}{\xi})^{-1}\|^{-1} < \inf_{\xi \in \Xi_c}\|(\pdv[2]{\psi(\xi)}{\xi})\|$.
    Since $\Xi_\theta$ is open, we can select an open subset $\Xi_a \subset \Xi_\theta$ away from the boundary of $\Xi$ such that there exists a positive $\alpha$ satisfying $\alpha I \prec \pdv[2]{\psi(\xi)}{\xi}  $ for any $\xi \in \Xi_a$. This is always possible since $\text{EM}(\tilde{c})$ is a regular exponential family, which means $\Xi$ is an open convex subset of $\mathbb{R}^{n_\xi}$ \cite{brown1986}. Since for any $i, \tilde{c}_i(\phi_\xi)u(\phi_\xi)\omega^{-1} \in \mathcal{W}^r_d(\mathcal{D})$ on $\Xi_\theta$, then $\|F_N(\hat{\xi}(\xi)) - \expvalb{\theta}{\tilde{c}}\| = \mathcal{O}(N^{-r}\log(N)^{(d-1)(r-1)})$ on $\Xi_\theta$ \cite[\textsection 4.1.1]{holtz2011}. As $N$ approaches infinity, the Jacobian $\pdv{F_N}{\xi}$ at any $\xi_a \in \Xi_a$ decreases to zero. Therefore, there exists $N_0$ and $r_1>0$ such that for any $\xi \in B(\xi_a,r_1) \subset \Xi_a$, the requirement $\|\pdv*{F_N}{\xi}\|<\alpha$ is satisfied for $N \ge N_0$ in $\hat{\Xi}_c \coloneqq \{ \hat{\xi}\colon \xi \in  B(\xi_a,r_1) \}$.
\end{proof}

The conditions in Proposition \ref{prop:Fixed_point} can be shown to be satisfied for some $r>0$ when $|p_\theta/(q_\xi^r)|$ is bounded and goes to zero as $\norm{x}\to\infty$ with $\Xi_\theta = \Xi$. This condition is valid for the three numerical simulations considered in Section \ref{sec:NumericalExamples}.

\subsection{Second Gaussian-Based Parametric Bijection}
In Section \ref{sec:first_bijection}, we have used the approximated moment-matching rule \eqref{eq:approximated_moment_matching} to construct the bijection from the hypercube $\mathcal{D}$ to $\mathbb{R}^d$. However, there are other quadrature methods that do not operate on the hypercube, for example, the Gauss--Hermite quadrature (GHQ) which operates on $\mathbb{R}^d$. To compute the cumulant-generating function efficiently using these numerical integration methods, we introduce another bijection where we specifically focus on the GHQ case. 

We use $N(d,l)$ to denote the number of quadrature nodes for a dimension $d$ and a level $l$, such that $N\coloneqq N(d,l)$. We obtain the sparse multivariate Gauss--Hermite nodes and weights $\left\{ \tilde{x}i,w_i \right\}_{i=1}^{N(d,l)}$ by using the Smolyak construction on one-dimensional Gauss--Hermite nodes (see \cite{bungartz2004} for details). We use a similar choice of univariate quadrature nodes per level as in GPQ, where, for a level $l\in \mathbb{N}$, the corresponding univariate nodes are given by $N(1,l) = 2^{l+1}-1$. This coincides with the choice of nodes introduced in \cite{jia2011}, however in their work, the quadrature levels greater than three are abandoned. Interestingly, if we select $N(1,l) = \mathcal{O}(2^l)$, then both GPQ and GHQ satisfy \cite{gerstner1998} $N(d,l) = \mathcal{O}(2^l l^{d-1})$.

The exponential of the cumulant-generating function is approximated as follows:

\begin{equation}
    \begin{split}
        &\int_{\mathbb{R}^d} \exp(c(x)^\top \theta) dx %
        \\
        &= \int_{\mathbb{R}^d} \exp(c(\phi_\xi(\tilde{x}))^\top\theta) \exp(\tilde{x}^\top \tilde{x}) \\
        &\qquad\times \abs{\det\pdv{\phi_\xi(\tilde{x})}{\tilde{x}}} \exp(-\tilde{x}^\top \tilde{x})  d\tilde{x}
        \approx \sum_{i=1}^{N(d,l)} w_i z(\tilde{x}_i), \label{eq:SparseGaussHermite}
    \end{split}
\end{equation}  
where, 
\begin{subequations}
    \begin{align}
        \phi_\xi(\tilde{x}) &\coloneqq \mu + \sqrt{2} T^{-1}\Lambda^{1/2} \tilde{x},\label{eq:Gauss_Hermite_Bijection}\\
        z(\tilde{x}) &\coloneqq \exp(c(\phi_\xi(\tilde{x}))^\top\theta) \exp(\tilde{x}^\top \tilde{x}) \abs{\det\pdv{\phi_\xi(\tilde{x})}{\tilde{x}}}.
    \end{align} 
\end{subequations}

In \eqref{eq:SparseGaussHermite}, $\left\{ \tilde{x}_i \right\}$ is the set of the Gauss--Hermite quadrature nodes, and $\left\{ w_i \right\}$ are their corresponding weights. We select $\mu$ and $\Sigma$ according to the moment-matching rule \eqref{eq:moment_matching}. Therefore, the bijection \eqref{eq:Gauss_Hermite_Bijection} also ensures that the sparse Gauss--Hermite quadrature nodes are always placed in the high-density domain in $\mathbb{R}^d$.

The GHQ scheme suffers from a very weak nesting capability since the intersection between roots of Hermite polynomials of successive orders contains only the origin $x=0$. This is in contrast with the GPQ scheme used in the previous section, which is highly efficient since it has a polynomial exactness up to order $3N+1$, in addition to being fully nested. Therefore, more integration nodes would be required by GHQ to achieve the same accuracy as GPQ (see \cite[Section 3.1.2]{gautschi2004}). A problem with high-order Gauss--Hermite methods is that the quadrature weights can easily lie below machine precision \cite{trefethen2022}. In some applications, fortunately, one can ignore the Gauss--Hermite nodes that have weights below machine precision and still obtain satisfying integration results. As we will see in Section \ref{sec:NumericalExamples}, with a similar sparse integration level to that of GPQ, the GHQ scheme combined with the adaptive bijection \eqref{eq:Gauss_Hermite_Bijection} might offer a competitive advantage compared to GPQ combined with \eqref{eq:phi_xi_gaussian} as their quadrature nodes spread wider than those of latter; see Figure \ref{fig:particle_vs_projection_densities}.

\subsection{Integrating the parametric bijection in the automatic projection filter}
We present Algorithm \ref{alg:projectionfiler} that combines the parametric bijection with the automatic projection filter algorithm. The bijections \eqref{eq:phi_xi_gaussian} and \eqref{eq:Gauss_Hermite_Bijection} have identical parameters $\mu$ and $\Sigma$, and thus can be implemented similarly.
\begin{algorithm}
\begin{algorithmic}[1]
	\Procedure{AutomaticProjectionFilter}{$p_\theta,\theta, \xi, t, dy_t$}
	\State $g\gets$  \Call{FisherMetric}{$p_\theta; \xi$} \Comment{Calculate Fisher Metric of $p_\theta$}
	\State $\tilde{\eta}\gets$  \Call{ExtendedExpectation}{$p_\theta; \xi$} \Comment{Calculate expectation of the extended natural statistics $\tilde{c}$}
	\State $\eta \gets \tilde{\eta}[:m]$  \Comment{The expectation of natural statistics $c$ is the first $m$ of $\tilde{\eta}$}
	\State $d\theta \gets$ \Call{FilterEquation}{$g,\xi,\tilde{\eta},\eta,t,dy_t$} \Comment Eq. \eqref{eq:Brigo_dTheta_EM_c_ast_polynomials}
	\State $\theta \gets \theta + d\theta$ \Comment{Update the parameter}
	\State $\xi \gets$ \Call{UpdateBijectionParameters}{$\eta, g $} \Comment{Eq. \eqref{eq:approximated_moment_matching}}
	\State \textbf{return} $\theta,\xi$\Comment{Return the updated  value of the natural parameter $\theta$ and bijection parameter $\xi$}
	\EndProcedure
\end{algorithmic}
\caption{Single step from the automatic projection filter using parametric bijection.\label{alg:projectionfiler}}
\end{algorithm}

\section{Numerical Examples}\label{sec:NumericalExamples}

In this section, we consider three numerical experiments to assess the effectiveness of our proposed bijections. For these examples, we choose $\mathscr{P}$ as the set of polynomials in $x_t$ with order less than or equal to some $n_p\in \mathbb{N}$. 

\subsection{Univariate example}
\newcommand{\onedsimulationid}{CubicSensor08_11_2022_16_17_33}

The first example is a scalar dynamical system with a nonlinear measurement model \cite{hanzon1991,brigo1999}:
\begin{subequations}
    \begin{align}
        dx_t &=  \kappa dt + \sigma dW_t,\label{eq:dx_one_d}\\
        dy_t &=  \beta x^3_t dt +  dV_t,
        \end{align}\label{eqs:one_d_simulation}
\end{subequations}
with two independent standard Wiener processes $\{ W_t , t \geq 0\}$ and $\{ V_t , t \geq 0\}$, and three constants given by the process noise constant $\sigma=0.4$, the drift constant $\kappa=0.25$, and the nonlinear measurement scale $\beta = 0.8$. It is well known that the optimal filter for this problem is infinite dimensional \cite{hazewinkel1983}. We use an exponential manifold with $c_i \in \{x, x^2, x^3, x^4\}$. We remark that the drift term in \eqref{eq:dx_one_d} makes the filtering problem significantly harder than that of \cite{emzir2023}. We choose the initial parameters of the projection filter as $\theta_0 = [0,2,0,-1]^\top$. This initial condition vector reflects exactly the initial density of the dynamical system which corresponds to a bimodal non-Gaussian density with peaks at $-1$ and $1$. We solve the Kushner–Stratonovich stochastic PDE on a uniform grid. The simulation time step is set to be $10^{-4}$. We generate one measurement realization with $x_0 = 1$. We compare three different bijections: The first one is a static bijection $\tanh(\tilde{x})^{-1}$, the second one is the Gaussian-based bijection \eqref{eq:phi_xi_gaussian}, and the third one is the Gauss--Hermite bijection \eqref{eq:Gauss_Hermite_Bijection}. Furthermore, we use $\Chebyshevbasislow$ and $\Chebyshevbasishigh$ integration nodes for the static bijection and $\Chebyshevbasislow$ such nodes for both parametrized bijections.

The simulation results, shown in Figure \ref{figs:comparison_one_d}, depict the evolution of the densities obtained with the finite difference approach and with the projection filters, along with their corresponding Hellinger distances, \textcolor{black}{where the Hellinger distance between two densities $p$ and $q$ is given by $\frac{1}{2}\int_\mathcal{X}(\sqrt{p}-\sqrt{q})^2 dx$}. Even though both projection filters based on parametric bijections \eqref{eq:phi_xi_gaussian} and \eqref{eq:Gauss_Hermite_Bijection} require less integration nodes than that based on the static bijection, they do however substantially reduce Hellinger distances compared to the finite difference approximation, as shown in Figure \ref{figs:comparison_one_d}. The projection filter based on the bijection \eqref{eq:phi_xi_gaussian} with GCQ produces better approximated densities than the GHQ scheme across the entire simulation time. This is also shown in Figure \ref{figs:comparison_one_d}, where the Hellinger distances associated with the GCQ scheme (FD-vs-Proj-GCQ-9) only oscillate between one and ten times the lowest Hellinger distance obtained from the static bijection with 18 quadrature nodes (FD-vs-static-18). In contrast to this and at the end of the simulation, the Hellinger distance between the finite difference solution and the projection filter approximation using the static bijection with 9 quadrature nodes (FD-vs-static-9) is about one hundred times greater than that from the lowest Hellinger distance (FD-vs-static-18). Thus, this simulation evidently shows that the parametric bijection \eqref{eq:phi_xi_gaussian} is superior to both the static and Gauss–Hermite bijections \eqref{eq:Gauss_Hermite_Bijection} for this example which portrays a hard stochastic nonlinear filtering problem to solve.

\begin{figure}
    \centering
    \includegraphics[trim={0cm 0.5cm 0.0cm 0.2cm},clip,width=0.9\linewidth]{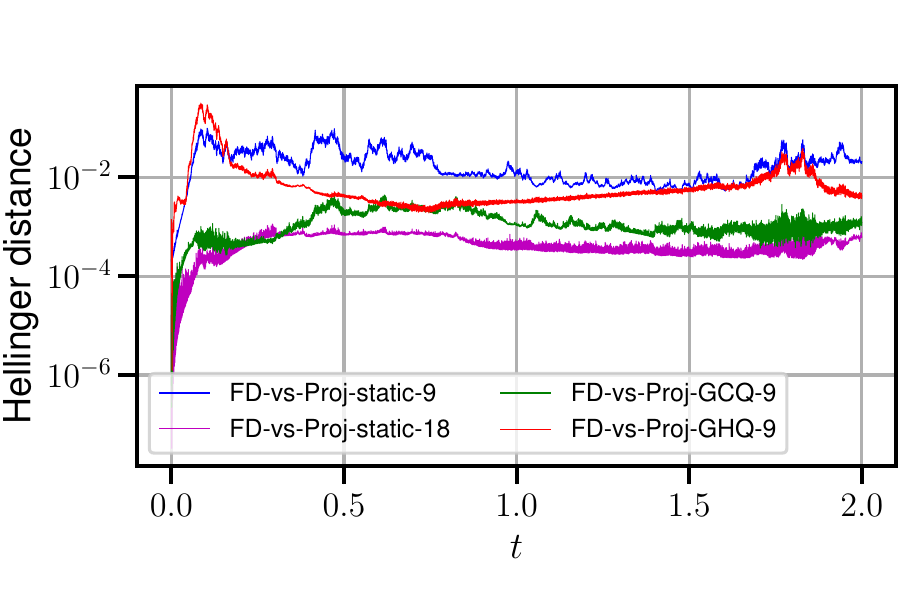}
    \caption{The Hellinger distances from the finite difference solutions to densities solved by projection filters with static bijections with $9$ and $18$ nodes (FD-vs-static-9 and FD-vs-static-18), or with parametric bijections \eqref{eq:phi_xi_gaussian} and \eqref{eq:Gauss_Hermite_Bijection} with $9$ nodes, (FD-vs-Proj-GCQ-9 and FD-vs-Proj-GHQ-9), respectively.}
    
    \label{figs:comparison_one_d}
    \end{figure}

\subsection{Modified Van der Pol Oscillator}

In this section, we compare the projection filter with a bootstrap particle filter with systematic resampling \cite{Chopin2020}. The dynamic model considered here is a modified Van der Pol oscillator, the standard form of which is widely used as a model for oscillatory processes in physics, electronics, biology, neurology, sociology and economics:
\begin{align} 
        d\mqty[x_{1,t}\\ x_{2,t}] &= \mqty[\kappa x_{1,t}+ x_{2,t} \\ -x_{1,t}+\kappa x_{2,t} + \mu (1 - x_{1,t}^2)x_{2,t}]dt +\mqty[0\\ \sigma_w] dW_t,\nonumber\\ 
        dy &= x_{1,t} dt + \sigma_v dV_t. \label{eqs:SDE_VDP_n_d_linear}
\end{align} 
In this simulation, we set $\mu = 0.3$, $\kappa=1.25$, and $\sigma_v=\sigma_w = 1$. We also set the simulation time step to be $2.5\times 10^{-3}$. Unlike the case with $\kappa=0$, where the probability density evolution can be easily contained using a compact support \cite{emzir2023}, the probability densities corresponding to \eqref{eqs:SDE_VDP_n_d_linear} with $\kappa >0$ expand quickly in time. We use here both GPQ and GHQ with their sparse-grid integration schemes where we set the level to four. For the GPQ scheme, the number of nodes used is 129 while for the GHQ scheme, it is 189 after ignoring all nodes with weights less than $10^{-9}$. For the particle filter, we use $9.6\times 10^6$ samples in our simulation.  We discretize the dynamic model \eqref{eqs:SDE_VDP_n_d_linear} using Euler--Maruyama for both the particle filter and the measurement process. For a multi index $\bunderline{i}\in\mathbb{N}^2$, define $x^{\bunderline{i}} = x_1^{i(1)}x_2^{i(2)}$. The natural statistics are set to be ${ x^{\bunderline{i}} }$, where $1 \leq \abs{\bunderline{i}}\leq 4$. Further, the initial density is set to be the standard Gaussian density.
To show the performance of the projection filter obtained by both sparse integration schemes, we calculate the empirical densities of the particle filter on a fixed grid. The comparison of the empirical densities from the particle filters with those from the projection filters is shown in Figure \ref{fig:particle_vs_projection_densities}, while the Hellinger distances between the empirical densities and those from the projection filters are in Figure \ref{fig:Hellinger_VDP}. Figure \ref{fig:particle_vs_projection_densities} shows that the bijected quadrature nodes of both sparse GPQ and GHQ schemes are systematically adapting to the shapes of the densities as they evolve in time. The shapes of the projection densities resemble those of the empirical densities from the particle filter, except that the projection filter's densities cannot capture some sharp notch-like shapes with low values as in Figure \ref{fig:particle_vs_projection_densities}. The Hellinger distances from the projection filter's densities obtained using the GHQ scheme to the empirical densities are slightly lower compared to those obtained using the GPQ scheme. 

\textcolor{black}{Table \ref{tab:execution_time_comparison} shows computational times of the projection filter with different sparse-grid levels and adaptive bijections, and the projection filter with a static bijection. For the static bijection, we use Gauss--Patterson sparse grid integration, where we set the static bijection to be $\tanh^{-1}$ following \cite{emzir2023}. As can be seen, the execution time of the projection filter using GHQ level 4 with bijection \eqref{eq:Gauss_Hermite_Bijection} is the fastest compared to the other projection filter schemes. We found that even with the maximum sparse-grid level available for Gauss--Patterson sparse grid integration (level 8), the projection filter with static bijection produces ill-defined projection densities around $t=0.7$. Therefore, to accurately implement the projection filter using the static bijection, a sparse-grid level higher than 8 is necessary, which means a significant increase in the quadrature nodes that will result in a substantial rise in execution time. As shown in Table \ref{tab:execution_time_comparison}, the computational times for the GHQ with bijection \eqref{eq:Gauss_Hermite_Bijection} increase only modestly, compared to those of the GPQ with bijection \eqref{eq:phi_xi_gaussian}. %
}

\newcommand{\lefttrimVDP}{5.0}
\newcommand{\righttrimVDP}{5.0}
\newcommand{\uppertrimVDP}{15.0}
\newcommand{\lowertrimVDP}{15.0}
\newcommand{\vdpsimulationid}{Expanded_VDP14_08_2023_12_08_14}

\begin{figure}
    \centering
    \includegraphics[trim={0cm 2.2cm 0cm 3.5cm},clip,width=0.9\linewidth]{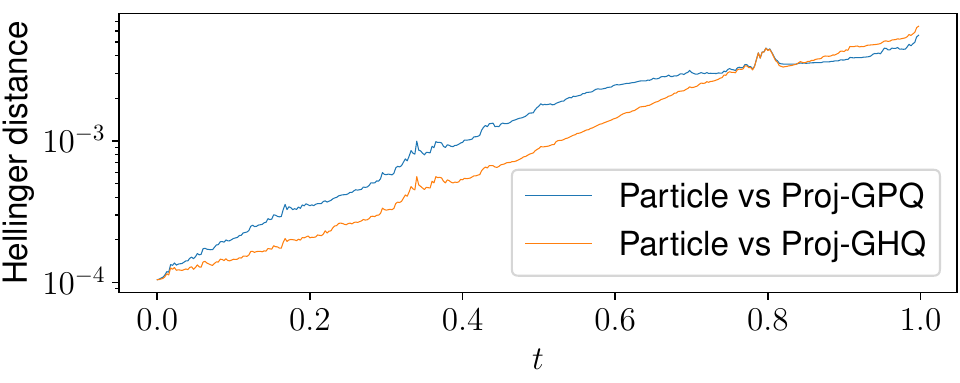}
    \caption{Hellinger distance from the empirical densities to the projection filter's densities solved using both GPQ and GHQ.}
    \label{fig:Hellinger_VDP}
\end{figure}

\begin{figure}
    \centering
    \makebox[\linewidth][c]{
    \includegraphics[trim={\lefttrimVDP cm \lowertrimVDP cm \righttrimVDP cm \uppertrimVDP cm},clip,width=0.9\linewidth]{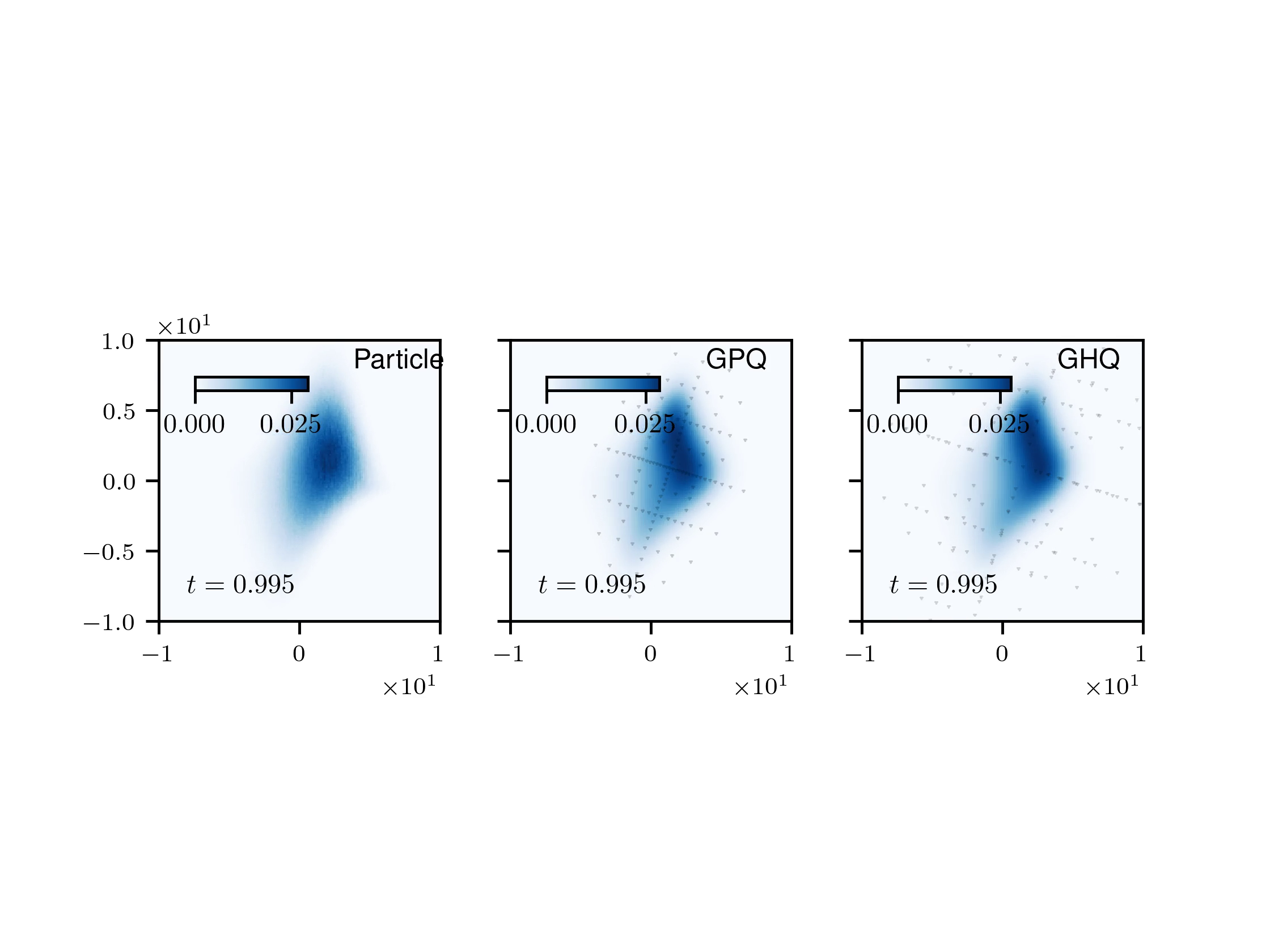}
    }
    \caption{Comparison of empirical densities from particle filter (left) and densities from the projection filters solved using GPQ (center) and GHQ (right) at $t=0.995$. The grey dots represents the position of the bijected quadrature nodes of the sparse Gauss--Patterson or Gauss--Hermite schemes, respectively.
    \label{fig:particle_vs_projection_densities}}
    \end{figure}

    \begin{table}[h]
    \caption{Computation time comparison for Section IV.B}
        \scriptsize
        \centering\begin{tabular}{| l | c | c | c | c | c |}
            \hline
                & \multicolumn{5}{c|}{Level}\\
            \hline
            Scheme & 4 & 5 & 6 & 7 & 8\\
            \hline
            GPQ & $3.543$s & $3.678$s  & $4.012$s & $4.785$s & $6.315$s\\
            \hline
            GHQ & $1.619$s & $1.636$s & $1.674$s & $1.833$s & $1.961$s \\
            \hline
            static-GPQ  & - & - & - & - & $1.964$s\\
            \hline
        \end{tabular}
        \label{tab:execution_time_comparison}
    \end{table}
    \newcommand{\lefttrimMoment}{0.45}
    \newcommand{\righttrimMoment}{0.25}
    \newcommand{\verticaltrimMoment}{0.35}
\subsection{Stochastic Epidemiology Application}

In this section, we consider an application of the projection filter to the stochastic suspected infected or recovery (SIR) nonlinear filtering problem, widely used for example in the study of the spread of infectious diseases. Consider the following SDE \cite{tornatore2005}:
\begin{align}
    d\mqty[x_{1,t}\\ x_{2,t}] &= \mqty[-\beta x_{1,t} x_{2,t} - \mu x_{1,t} + \mu \nonumber\\
    \beta x_{1,t}x_{2,t} - \left( \lambda + \mu \right) x_{2,t}] dt + \mqty[-\sigma x_{1,t}x_{2,t}\\ \sigma x_{1,t} x_{2,t}]dW_t,\\
        dy_{1,t} &= x_{2,t} dt + k dV_t.
\end{align}
In this equation, $x_{1,t}$ is the fraction of the suspected population, $x_{2,t}$ is the fraction of the infected population, assuming that the population size is constant. The fraction of the recovered population is given by $1-(x_{1,t}+x_{2,t})$, and its dynamic is non-stochastic which can be excluded from the SDE. The constants $\beta,\mu,\lambda$ correspond to the average number of contacts per infection per day, the birth rate, and the recovery rate of the infected people, respectively. The constants $\sigma$, and $k$ are positive constants associated with the process and measurement noises, respectively.

If $0<\beta<\min(\lambda+\mu -\frac{\sigma^2}{2},2\mu)$ then the disease-free equilibrium $x_\ast = (1,0)$ is globally asymptotically stable \cite[Theorem 2.1]{tornatore2005}. Therefore, the stationary probability measure has no density with respect to the Lebesgue measure. The high-density domain of the conditional probability density of SIR dynamics will be shrinking in time. In our simulation, we chose $\mu=0.2,\beta=0.14,\lambda=0.1,\sigma=0.2,k=10^{-4}$ and set the initial density to be a Gaussian density with mean $[0.95,0.02]^\top$ and variance $\text{diag}[0.95,0.02]\times 10^{-3}$. Using a similar exponential family to that of the previous section, we compare here the performance of the projection filter achieved by using the bijection \eqref{eq:phi_xi_gaussian} and GPQ level 5 with that of the particle filter. The Hellinger distance between the two densities at different times can be seen in Figure \ref{fig:Hellinger_SIR}. This, therefore, clearly shows that the bijection \eqref{eq:phi_xi_gaussian} can also perform very well in accurately tracking probability densities that shrink in time, as is the case here with SIR dynamics, and, by the same token, also demonstrates the adaptive capability of the proposed bijection \eqref{eq:phi_xi_gaussian}. \textcolor{black}{Compared to the projection filter with the proposed bijections, employing the projection filter alongside GPQ level 8 and the static bijection $\tanh^{-1}$ leads to an ill-defined projection density within just a few iterations.}
\newcommand{\sirsimulationid}{SIR_Filtering14_08_2023_18_47_14}

\begin{figure}[H]
    \centering
    \includegraphics[trim={0cm 2.2cm 0cm 3.5cm},clip,width=0.9\linewidth]{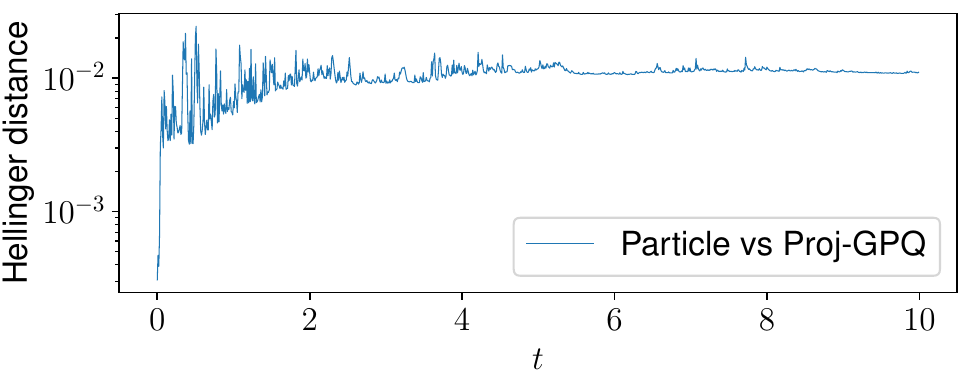}
    \caption{Hellinger distance from the empirical densities to the projection filter's densities solved using GPQ level 5.}
    \label{fig:Hellinger_SIR}
\end{figure}

\section{Conclusions}\label{sec:Conclusions}
In this work, we have introduced two new parametric bijections for the automatic projection filter that was recently proposed in \cite{emzir2023}. The first bijection was constructed by selecting a Gaussian density $q_\xi$ whose parameters were obtained by minimizing the KL divergence between the projected density $p_\theta$ and the Gaussian density $q_\xi$, which is equivalent to evaluating moment-matching conditions. We have shown that this bijection also minimizes the squared integration error under some sufficient theoretical conditions. The second bijection was also constructed via the same moment-matching conditions, but it was tailored for the GHQ scheme. We then applied these bijections to three practically-motivated numerical examples, and we found that they all achieved superior performance in terms of the Hellinger distances to the ground truth than the static bijection, using fewer quadrature nodes and thus achieving both higher accuracy and reduced computational time.

\bibliographystyle{IEEEtran}
\bibliography{Zotero_BibTeX}
\includepdf[pages=-]{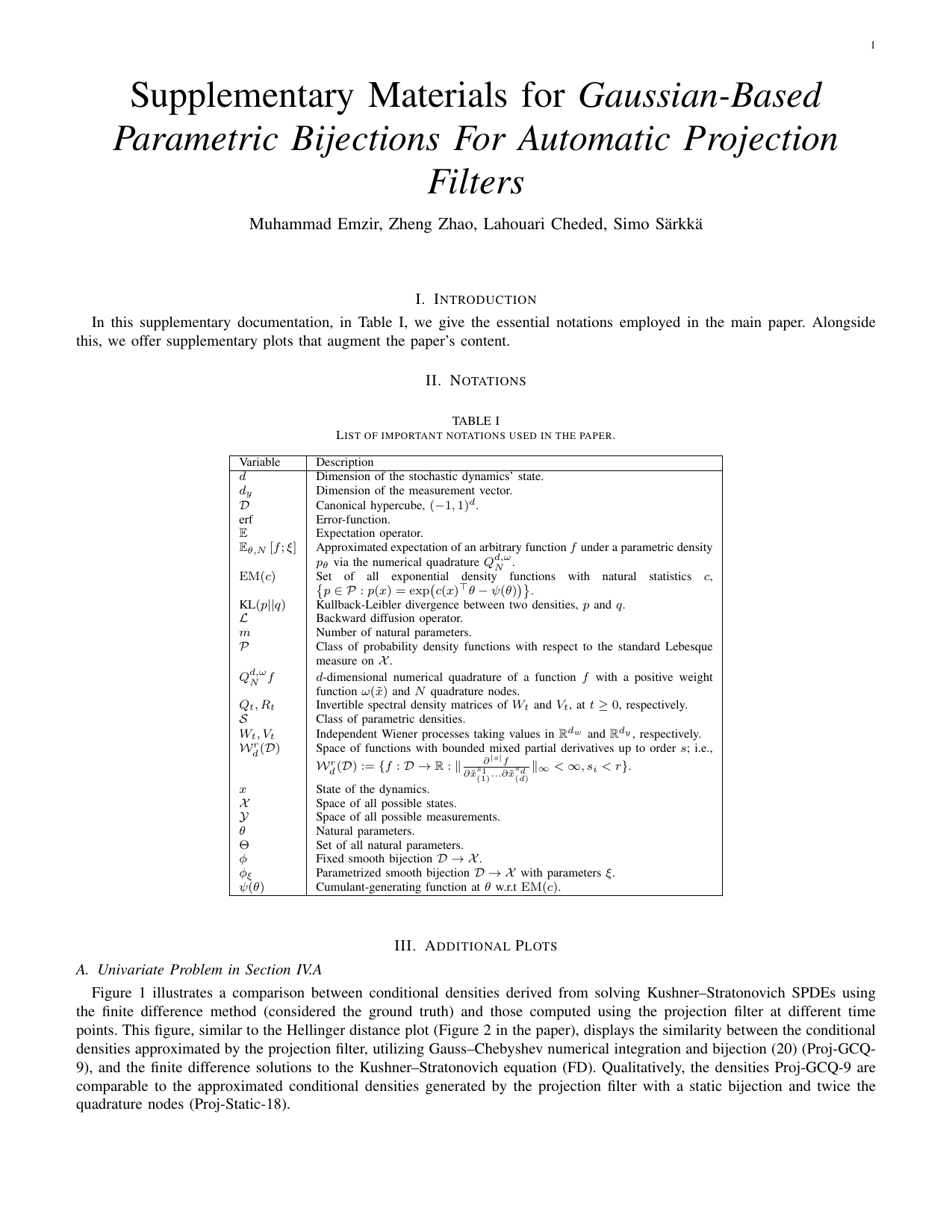}

\end{document}